\newtheorem{theorem}{Theorem}[section]
\newtheorem{corollary}{Corollary}[section]
\newtheorem{remark}{Remark}[section]
\newtheorem{definition}{Definition}[section]
\begin{document}
 
\title{Optimal smoothing factor with coarsening by  three for the MAC scheme for the Stokes equations}

\author{Yunhui He\thanks{Department of Computer Science, The University of British Columbia, Vancouver, BC, V6T 1Z4, Canada,   \tt{yunhui.he@ubc.ca}.}  }

\maketitle

\begin{abstract}
 In this work, we propose a local Fourier analysis for multigrid methods with coarsening by a factor of three for the staggered finite-difference method applied to the Stokes equations.   In \cite{YH2021massStokes},   local Fourier analysis has been applied to  a  mass-based Braess-Sarazin relaxation,  a mass-based $\sigma$-Uzawa relaxation,   and  a mass-based distributive relaxation,  with standard coarsening on staggered grids for the Stokes equations.  Here, we consider multigrid methods with coarsening by three  for these relaxation schemes.  We derive theoretically optimal smoothing factors for this coarsening strategy. The optimal smoothing factors  of coarsening by three are nearly equal to those obtained from standard coarsening. Thus, coarsening by three is superior computationally.   Moreover,  coarsening by three  generates a nested hierarchy of grids, which simplifies and unifies the construction of grid-transfer operators.
\end{abstract}

\vskip0.3cm {\bf Keywords.}
 
Multigrid, Stokes equations, local Fourier analysis, staggered finite-difference method,  three-coarsening 
 
 \vspace{2mm}

\section{Introduction}
\label{sec:intro}
 
We are interested in multigrid methods \cite{MR1156079}  for the numerical solution of the Stokes equations. In the literature,  different types of discretizations have been applied to the Stokes  equations, for example,  finite element methods \cite{elman2006finite,dohrmann2004stabilized}, finite difference methods \cite{han1998new,da2009mimetic},  and finite volume methods \cite{ye2006discontinuous,droniou2009study}.   As is well-known,  the marker and cell method (MAC), a finite difference discretization,  is one of the most effective numerical schemes for solving  the Stokes equations \cite{oosterlee2006multigrid,he2018local} and in this paper we focus on this method. It was introduced  in \cite{lebedev1964difference, welch1965mac} for solving viscous, incompressible and transient fluid-flow problems, and has been extended to other problems.   Many studies of multigrid methods focus on standard coarsening for the MAC scheme for the Stokes equations.   The unknowns (velocity and pressure) are located at different locations: the velocity components are placed at the cell faces and the pressure  is placed in the center of the cells,  and, as a result, for standard coarsening, the locations of unknowns for coarse-grid problems are not  subsets of these on the fine grid.

The choice of multigrid components, such as relaxation schemes and  grid-transfer operators,  plays an important role in designing fast algorithms.    Block-structured relaxation schemes are often used for the Stokes equations, such as  Braess-Sarazin relaxation \cite{braess1997efficient,YH2021massStokes},  Uzawa relaxation \cite{MR833993,MR3217219,MR1302679,drzisga2018analysis}, and  distributive relaxation \cite{bacuta2011new,oosterlee2006multigrid,MR3071182,chen2015multigrid,MR1049395}.   
In our recent work \cite{YH2021massStokes},   we presented three  block-structured relaxation schemes for solving the Stokes equations discretized by MAC  method: a mass-based  Braess-Sarazin relaxation ($Q$-BSR),  a mass-based $\sigma$-Uzawa relaxation ($Q$-$\sigma$-Uzawa), and a mass-based distributive relaxation ($Q$-DR), where  we used a mass matrix $Q$ derived from bilinear finite elements in two dimensions to approximate the inverse of a scalar Laplacian discretized by five-point finite difference method.  Local Fourier analysis (LFA) was used to study  smoothing with standard coarsening.  We obtained  an optimal smoothing factor of  $\frac{1}{3}$  for the mass-based  distributive  and   Braess-Sarazin  relaxation schemes, and $\sqrt{\frac{1}{3}}$ for the mass-based $\sigma$-Uzawa relaxation.  These relaxation schemes show high efficiency.

However, for standard coarsening in MAC scheme,  different types of grid-transfer operators are needed  for velocity components and pressure \cite{MR1049395}.  In contrast,   coarsening by three  generates a nested hierarchy of grids, which simplifies the definition of grid-transfer operators. A coarsening-by-three strategy has the potential advantage of coarsening more quickly and reducing the number of levels.  Multigrid methods for the generalized Stokes equations with coarsening by  three  with distributive Gauss-Seidel smoothing is presented in \cite{butt2018multigrid}. For multigrid methods with coarsening by a factor of three applied to other problems, see \cite{dendy2010black,gaspar2009geometric,yavneh2012nonsymmetric}.  Unfortunately, there are few  studies on coarsening by three for the Stokes equations.  Thus, we study $3h$-coarsening for the MAC scheme.

Motivated by the advantages of multigrid with coarsening by three and the high efficiency of mass-based relaxation schemes, we wish to explore what  the optimal smoothing factors  are for these relaxation schemes.  Choosing appropriate algorithmic parameters is challenging. Thus, we apply LFA to help us identify proper parameters and quantitatively predict multigrid convergence speed.  

The main contribution of this work is  the presentation of a theoretical analysis of optimal smoothing factors of  three mass-based   multigrid relaxation schemes for staggered grids using a three-coarsening strategy for the Stokes equations. We derive an optimal smoothing factor and show that it is  $\frac{17}{47}\approx 0.362$ for  $Q$-BSR and $Q$-DR, and  an optimal smoothing  factor of $\sqrt{\frac{17}{47}}\approx 0.601$ for  $Q$-$\sigma$-Uzawa relaxation.  Thus, $Q$-BSR outperforms other two relaxation schemes. Note that the optimal smoothing factors for coarsening  by three are very close to those ($\frac{1}{3}$ and $\sqrt{\frac{1}{3}}\approx 0.577$) for standard coarsening.  It means that the computation work of coarsening is competitive with standard coarsening.    

To avoid solving Schur complement system exactly in $Q$-BSR,  we propose an  inexact version of $Q$-BSR, called $Q$-IBSR,  where  one sweep of  weighted-Jacobi iteration is applied to the Schur complement system.  Numerically, we find that $Q$-IBSR achieves the same convergence factor as that of exact version, that is,   $\frac{17}{47}$. Moreover, we study the influence of different types of grid-transfer operators on the actual multigrid convergence. We test the Stokes equations with Dirichlet boundary conditions, and numerical results show that $Q$-IBSR is not too sensitive to the boundary conditions, but  we see degradation on convergence for $Q$-DR $Q$-$\sigma$-Uzawa relaxation. 
  
The reminder of the paper is organized as follows.   In Section \ref{sec:discretization}, we review  staggered finite-difference discretization  for the Stokes equations.  In Section \ref{sec:smoothing-analysis}, we derive  optimal smoothing factors of LFA for  three mass-based block-structured multigrid relaxation schemes proposed in \cite{YH2021massStokes} with coarsening by three.   In Section \ref{sec:Numer}, we study different grid-transfer operators by LFA and present some numerical results  to validate our theoretical results.  Some conclusions  are drawn in Section \ref{sec:concl}.
\section{Discretization}\label{sec:discretization}

Consider  the following Stokes equations in two dimensions
\begin{eqnarray}
  -\Delta\boldsymbol{u}+\nabla p&=&\boldsymbol{f},\label{eq:Stokes1}\\
  \nabla\cdot \boldsymbol{u}&=&0,\label{eq:Stokes2}
\end{eqnarray}
where $\boldsymbol{u}=\begin{pmatrix} u \\ v \end{pmatrix}$ is the velocity vector, and  $p$ is the scalar pressure of a viscous fluid.

For discretization, we consider uniform meshes with $h_{x_1}=h_{x_2}=h$  and apply the MAC scheme \cite{MR1807961} to equations    \eqref{eq:Stokes1} and \eqref{eq:Stokes2}.The discrete unknowns $u, v, p$, are defined at different positions on the grid:  the velocity components $u, v$ and pressure $p$ are defined at  the middle points of vertical edges ($\Box$),  at the middle points of horizontal edges ($\circ$), and  in the center of each cell ($\smallstar$), respectively, shown in Figure \ref{fig:MAC-stokes}.  
 
 \begin{figure}[htp]
\centering
\includegraphics[width=0.5\textwidth]{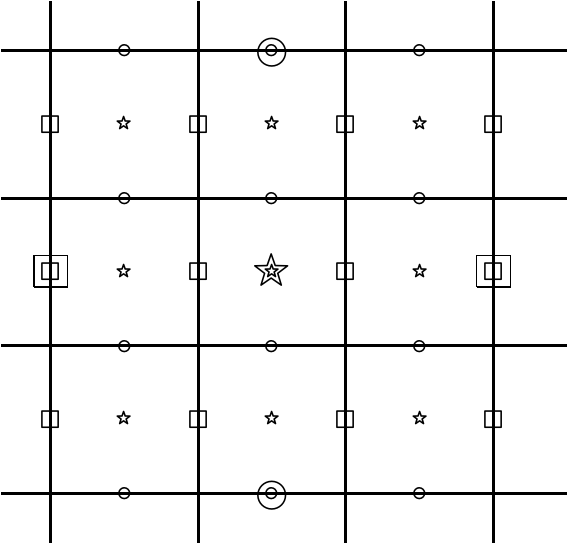}
 \caption{The staggered location of unknowns: $\Box-u,\,\, \circ-v,  \,\, \smallstar-p$. The corresponding larger shapes denote  unknowns on a coarse grid by coarsening three.}\label{fig:MAC-stokes}
\end{figure}
 The  MAC scheme of the Stokes equations is represented by the stencils  \cite{MR1807961}
 \begin{equation}\label{eq:Lh-operator}
   \mathcal{L}_h   =\begin{pmatrix}
      -\Delta_{h} & 0 &  (\partial_{x_1})_{h/2}\\
      0 & -\Delta_{h} & (\partial_{x_2})_{h/2} \\
      -(\partial_{x_1})_{h/2}  & -(\partial_{x_2})_{h/2} & 0
    \end{pmatrix},
  \end{equation}
where 
\begin{equation*}
   -\Delta_{h} =\frac{1}{h^2}\begin{bmatrix}
       & -1 &  \\
      -1 & 4 & -1 \\
        & -1 &
    \end{bmatrix},\quad
     (\partial_{x_1})_{h/2} =\frac{1}{h}\begin{bmatrix}
       -1& 0 & 1 \\
    \end{bmatrix},\quad
     (\partial_{x_2})_{h/2} =\frac{1}{h}\begin{bmatrix}
       1 \\
      0 \\
      -1
    \end{bmatrix}.
\end{equation*}

Discretizations \eqref{eq:Stokes1}  and  \eqref{eq:Stokes2} lead to the following saddle-point system 
\begin{equation}\label{eq:saddle-structure}
       \mathcal{L}_{h} \boldsymbol{ x}=\begin{pmatrix}
      A & B^{T}\\
     B & 0\\
    \end{pmatrix}
        \begin{pmatrix} \boldsymbol{u}_{h} \\ p_{h}\end{pmatrix}
  =\begin{pmatrix} \boldsymbol{f}_h \\ 0 \end{pmatrix}=b_h,
 \end{equation}
where $A$ corresponds to the discretized vector Laplacian, $B$  stands for  the negative of the discrete divergence operator, $B^T$  is  the discrete gradient, and $\boldsymbol{u}_{h}=\begin{pmatrix} u_{h} \\ v_{h} \end{pmatrix}$.

Here, we are interested in  multigrid methods for solving  linear system \eqref{eq:saddle-structure}.  In multigrid,  there are two  important processes: smoothing and coarse-grid correction.   For a given  approximation $x_k$ and a smoother $\mathcal{M}_h$, an approximation to  $\mathcal{L}_{h} $,  the smoothing process or  relaxation scheme is
\begin{equation}\label{eq:general relaxation-form}
 \boldsymbol{ x}_{k+1} = \boldsymbol{ x}_k+\omega \mathcal{M}_h^{-1}(b_h- \mathcal{L}_{h}  \boldsymbol{ x}_k),
\end{equation}
where $\omega$ is a damping parameter to be determined. Then, the error-propagation operator for  relaxation scheme \eqref{eq:general relaxation-form} is 
\begin{equation*} 
  \mathcal{S}_h =I-\omega  \mathcal{M}_h^{-1}  \mathcal{L}_h.
\end{equation*}
Here, we consider   mass-based distributive weighted-Jacobi relaxation, Braess-Sarazin relaxation, and Uzawa-type
relaxation  proposed in \cite{YH2021massStokes} and employ  LFA to  investigate these mass-based relaxation schemes for coarsening by three in next section.

\section{Local Fourier analysis}\label{sec:smoothing-analysis} 
LFA or local mode analysis \cite{MR1807961} was first introduced by Brandt \cite{brandt1977multi} to study  smoothing of multigrid methods for boundary value problems. It has since been  extended to many other problems.  LFA is used to quantitatively analyze and predict the convergence speed of multigrid methods.  There are two important factors in LFA:   smoothing factor and    two-grid convergence factor.  In many cases, the smoothing factor of LFA, assuming  an  ideal coarse grid operator that   annihilates the low frequency error components and leaves the high frequency components unchanged,   gives a sharp prediction  of actual multigrid convergence.  The two-grid convergence factor of LFA estimates the effect of  the real coarse grid operator and relaxation scheme, which sometimes poses  challenges to derive analytically  optimal  parameters in multigrid.   Thus, we focus on smoothing analysis. 

We employ LFA to study the smoothing property of  three mass-based block-structured multigrid relaxation schemes proposed in \cite{YH2021massStokes} for staggered discretizations with  coarsening by three.  High and low frequencies for  coarsening by three are defined as
\begin{equation}\label{eq:theta-domain}
  \boldsymbol{\theta}\in T^{{\rm L}} =\left[-\frac{\pi}{3},\frac{\pi}{3}\right)^{2}, \, \boldsymbol{\theta}\in T^{{\rm H}} =\displaystyle \left[-\frac{\pi}{2},\frac{3\pi}{2}\right)^{2} \bigg\backslash \left[-\frac{\pi}{3},\frac{\pi}{3}\right)^{2}.
\end{equation}
In the literature, there are some studies on LFA for coarsening by three.  For example,  an  LFA  \cite{gaspar2009geometric} was proposed  to  design  efficient geometric multigrid methods on hierarchical triangular grids using a three-coarsening strategy for a three-color and block-line type smoothers for the discrete Laplace operator.  In \cite{brannick2015local}, LFA was used  to determine automatically the optimal values for the parameters involved in defining the polynomial smoothers  for  multigrid methods with aggressive coarsening to study Poisson equation.   

Let us introduce the definition of symbol for a discrete operator \cite{MR1807961,he2018local,gaspar2009geometric} in LFA.   
\begin{definition}
Let $L_h =[s_{\boldsymbol{\kappa}}]_{h}$  be a scalar stencil operator acting on grid ${G}_{h}$ as
\begin{equation*}
  L_{h}w_{h}(\boldsymbol{x})=\sum_{\boldsymbol{\kappa}\in{V}}s_{\boldsymbol{\kappa}}w_{h}(\boldsymbol{x}+\boldsymbol{\kappa}h),
\end{equation*}
where  $s_{\boldsymbol{\kappa}}\in \mathbb{R}$ is constant,   $w_{h}(\boldsymbol{x}) \in l^{2} ({G}_{h})$, and  ${V}$ is a finite index set. 
Then, the  symbol of $L_{h}$ is defined as:
\begin{equation}\label{eq:symbol-calculation-form}
 \widetilde{L}_{h}(\boldsymbol{\theta})=\displaystyle\sum_{\boldsymbol{\kappa}\in{V}}s_{\boldsymbol{\kappa}}e^{i \boldsymbol{\theta}\cdot\boldsymbol{\kappa}},\,\, i^2=-1. 
\end{equation} 
\end{definition}
\begin{definition} The LFA smoothing factor for   the error-propagation operator $\mathcal{S}_h$  is defined as
\begin{equation}\label{eq:mu-p-form}
  \mu_{{\rm loc}}(\mathcal{S}_h(\boldsymbol{p}))=\max_{\boldsymbol{\theta}\in T^{{
  \rm H}}}\left\{ \rho(\widetilde{\mathcal{S}}_h(\boldsymbol{\theta},\boldsymbol{p}))  \right\},
\end{equation}
where  $\boldsymbol{p}$ is algorithmic parameters, such as a damping parameter or some parameters involved in smoother $\mathcal{M}_h$,  and $\rho\big(\widetilde{\mathcal{S}}_h(\boldsymbol{\theta},\boldsymbol{p})\big)$ denotes the  spectral radius of symbol $\widetilde{\mathcal{S}}_h(\boldsymbol{\theta},\boldsymbol{p})$. 
\end{definition}
For the MAC scheme considered here, $\widetilde{\mathcal{S}}_h$ is a $3\times 3$ matrix,  due to the block structure of $\mathcal{L}_h$, see \eqref{eq:Lh-operator}.   We often need to  minimize  $\mu_{{\rm loc}}(\mathcal{S}_h(\boldsymbol{p}))$  over algorithmic parameters $\boldsymbol{p}$   to obtain fast convergence speed.  We define the LFA optimal smoothing factor as follows.
\begin{definition} 
The LFA optimal smoothing factor  for  the error-propagation operator $\mathcal{S}_h$ is defined as
  \begin{equation}\label{eq:def-opt}
    \mu_{{\rm opt}}=\min_{\boldsymbol{p} \in \mathcal{R}^{+}}{\mu_{{\rm loc}}}(\mathcal{S}_h(\boldsymbol{p})).
  \end{equation}
\end{definition}
Our main goal of this work is to solve \eqref{eq:def-opt} analytically  to identify   optimal  parameters to obtain  optimal smoothing factors for  three  mass-based block-structured relaxation schemes introduced in the following. 
 
In a two-grid method, the two-grid error operator can be expressed as 
\begin{equation*}
E_h(\nu_1,\nu_2)= \mathcal{S}_h^{\nu_2} (I- P_h(L^*_H)^{-1} R_h \mathcal{L}_h)\mathcal{S}_h^{\nu_1},
\end{equation*}
where integers $\nu_1$ and $\nu_2$ are the number of pre- and postsmoothing steps,  respectively.   $L^*_H$ is the coarse-grid operator, and $P_h$ and $R_h$ are interpolation and restriction operators, respectively.  The choices of $L^*_H, R_h,$ and $P_h$ are very important for designing a good coarse-grid correction.   As to  the discrete operators on the coarser grids in the hierarchy, direct discretization of the continuous operators is used.

 For the interpolation and restriction  operators, we will consider the following choices,  \eqref{eq:P-2D-stencil-25} and \eqref{eq:R-2D-stencil}, and apply LFA to study  the corresponding two-grid convergence factors.  

\begin{equation}\label{eq:P-2D-stencil-25}
 P_{h,25}   =  \frac{1}{9}
 \left ]   \begin{tabular}{ccccc}
   1 &  2   & 3&  2 &1\\
   2&  4   & 6&  4 &2\\
  3 &  6   & 9&  6 &3\\
  2&  4   & 6&  4 &2\\
  1 &  2   & 3&  2 &1
  \end{tabular}
\right [,
\end{equation}
and the restriction is  taken to  be 

 \begin{equation}\label{eq:R-2D-stencil}
 R_{h,1}   =   \begin{bmatrix} 1
   \end{bmatrix}, \quad  
 R_{h,9}   =  \frac{1}{9}\begin{bmatrix}
   1 &  1   & 1  \\
   1 &  1   & 1  \\
  1 &  1   & 1  
   \end{bmatrix}, \quad  
 R_{h,9b}   =  \frac{1}{16}\begin{bmatrix}
   1 &  2   & 1  \\
   2 &  4   & 2  \\
  1 &  2  & 1  
   \end{bmatrix}.
\end{equation}

\begin{remark}
Some adaption is needed to compute the  symbols of $R_h$ and $P_h$ compared with these defined on collected grids (all discrete unknowns are defined at the same location). \cite[Section 3.4] {farrell2021local} gives a general formula to compute symbols of grid-transfer operators defined on general grids, which can be used here.
\end{remark}

\begin{definition}
Let $\widetilde{\mathbb{E}}_h$ be the two-grid symbol of $E_h$. Then, two-grid LFA convergence factor of $E_h$ is defined as
\begin{equation}\label{eq:rho-form}
\rho_h(\nu_1,\nu_2)  =\max_{\boldsymbol{\theta}\in T^{{\rm L}}}\left\{ \rho(\widetilde{\mathbb{E}}_h(\boldsymbol{\theta},\boldsymbol{p})) \right\},
\end{equation} 
where  $\rho(\widetilde{\mathbb{E}}_h(\boldsymbol{\theta},\boldsymbol{p}))$ denotes the spectral radius of matrix  $\widetilde{\mathbb{E}}_h(\boldsymbol{\theta},\boldsymbol{p})$.
\end{definition}

In the two-grid method, coarse and fine grid operators are involved. The dimension of the space of $3h$-harmonics for  $\boldsymbol{\theta}$  is $3\times 3=9$, see \cite{gaspar2009geometric}.  The discrete Stokes equations is a $3\times 3$ block system. Finally,  $\widetilde{\mathbb{E}}_h(\boldsymbol{\theta},\boldsymbol{p})$ is a $27\times 27$ matrix.  Note that for fixed $\nu_1$ and $\nu_2$,  $\rho_h$ in  \eqref{eq:rho-form} is a function of $\boldsymbol{p}$. In general, it is hard to theoretically minimize $\rho_h$ over $\boldsymbol{p}$.  Thus, our focus will be on solving \eqref{eq:def-opt}.  Then, we use the corresponding  optimal parameters $\boldsymbol{p}$ to compute the two-grid LFA convergence factor $\rho_h$, and compare with actual multigrid performance.    For simplicity, throughout the rest of this paper,   we drop the subscript
$h$, except when necessary for clarity.

  \subsection{Mass matrix approximation to scalar Laplacian}
In order to design good smoothers for \eqref{eq:saddle-structure},  the approximation to  the discrete Laplacian $A$ plays an important role.  Our recent work \cite{CH2021addVanka} shows that   mass matrix obtained from bilinear elements in two dimensions is a good  smoother for  solving discrete Laplacian with standard coarsening.  Here, we analyze  ``mass approximation'' in multigrid with coarsening by three. The mass stencil for bilinear discretization  in 2D is given by
\begin{equation}\label{eq:mass-2D-stencil}
  Q    =  \frac{h^2}{36}\begin{bmatrix}
   1 &  4    & 1\\
   4 &  16   & 4\\
   1 &  4    & 1
   \end{bmatrix}.
\end{equation}
Using \eqref{eq:symbol-calculation-form}, the symbol of $Q$ is
\begin{equation}\label{eq:symbol-mass-Q}
  \widetilde{Q}(\theta_1,\theta_2)  =\frac{h^2}{9}(4+2\cos\theta_1+2\cos\theta_2+\cos\theta_1\cos\theta_2).
\end{equation}
Recall  that the standard five-point scheme for  $-\Delta_{h}=A_s$ in \eqref{eq:Lh-operator}, whose symbol is
\begin{equation}\label{eq:scalar-symbol-Laplace}
\widetilde{A}_s  =\frac{4-2\cos\theta_{1}-2\cos\theta_2}{h^2}.
\end{equation}
From \eqref{eq:symbol-mass-Q} and \eqref{eq:scalar-symbol-Laplace}, we have
\begin{equation} \label{eq:AM-MA-symbol}
\widetilde{Q} \widetilde{A}_s=\frac{2}{9}(4+2\cos\theta_1+2\cos\theta_2+\cos\theta_1\cos\theta_2) (2-\cos\theta_{1}-\cos\theta_2) .
\end{equation}
For mass-based relaxation applied  to the scalar Laplacian with coarsening by three, we now present the optimal smoothing factor.
\begin{theorem}\label{thm:mass-Laplace-opt-mu}
Consider the relaxation error operator $S_s= I-\omega  QA_s $. For $\boldsymbol{\theta} \in T^{\rm H}$, we have
\begin{equation}\label{eq:QA-min-max-values}
 (\widetilde{Q} \widetilde{A}_s)_{\rm min} =\frac{5}{6}, \quad  (\widetilde{Q} \widetilde{A}_s)_{\rm max} =\frac{16}{9}.
\end{equation} 
Moreover, the smoothing factor for $S_s$ with coarsening by three is 
\begin{equation*}
\mu_{\rm opt}(S_s) = \min_{\omega} \max_{\boldsymbol{\theta} \in T^{\rm H}} \{|1-\omega  \widetilde{Q} \widetilde{A}_s|\} =\frac{17}{47} \approx 0.362,
\end{equation*} 
provided that $\omega=\frac{36}{47}$.
\end{theorem}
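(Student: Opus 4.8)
The plan is to turn this two-variable trigonometric optimization over $T^{\rm H}$ into a polynomial optimization over a planar region, and then to resolve a standard one-parameter minimax problem. First I would rewrite the symbol \eqref{eq:AM-MA-symbol} in fully factored form. The quadratic factor splits as $4+2\cos\theta_1+2\cos\theta_2+\cos\theta_1\cos\theta_2=(2+\cos\theta_1)(2+\cos\theta_2)$, so that
\[
\widetilde{Q}\widetilde{A}_s=\tfrac{2}{9}(2+\cos\theta_1)(2+\cos\theta_2)(2-\cos\theta_1-\cos\theta_2).
\]
I then substitute $x=\cos\theta_1,\ y=\cos\theta_2$ and study $f(x,y)=\tfrac{2}{9}(2+x)(2+y)(2-x-y)$. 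The crucial bookkeeping step is to identify the image of $T^{\rm H}$ under $(\theta_1,\theta_2)\mapsto(x,y)$. Since $\cos\theta_j>\tfrac12$ holds precisely for $\theta_j\in(-\tfrac{\pi}{3},\tfrac{\pi}{3})$ inside the range $[-\tfrac{\pi}{2},\tfrac{3\pi}{2})$, the low-frequency box $[-\tfrac{\pi}{3},\tfrac{\pi}{3})^2$ corresponds to the corner $\{x>\tfrac12,\ y>\tfrac12\}$, whence $T^{\rm H}$ maps onto the L-shaped region $R=[-1,1]^2\setminus(\tfrac12,1]^2$. All factors of $f$ are nonnegative on $[-1,1]^2$, so the extrema of $\widetilde{Q}\widetilde{A}_s$ are exactly the extrema of $f$ over $R$.

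Second, I would locate the extrema of $f$ on $R$. For interior critical points, setting $\nabla f=0$ (most cleanly in the variables $u=2+x$, $v=2+y$, $w=2-x-y=6-u-v$) forces $u=v=w=2$, i.e.\ the unique interior stationary point $(x,y)=(0,0)$, where $f=\tfrac{16}{9}$; this is the maximum. The minimum must therefore lie on the boundary, which I would examine edge by edge (exploiting the $x\leftrightarrow y$ symmetry). On $x=-1$ the one-dimensional restriction is a downward parabola with endpoint minimum $\tfrac{8}{9}$ at $(-1,-1)$; on the outer edge $x=1,\ y\in[-1,\tfrac12]$ the restriction decreases to $\tfrac{5}{6}$ at $(1,\tfrac12)$; and on the re-entrant cut $x=\tfrac12,\ y\in[\tfrac12,1]$ it decreases to $\tfrac{5}{6}$ at $(\tfrac12,1)$. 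Collecting these gives $\min_R f=\tfrac{5}{6}$ and $\max_R f=\tfrac{16}{9}$, which is \eqref{eq:QA-min-max-values}. Since $R$ is connected and $f$ continuous, the intermediate value theorem shows $\widetilde{Q}\widetilde{A}_s$ sweeps the full interval $[\tfrac56,\tfrac{16}{9}]$ as $\boldsymbol{\theta}$ ranges over $T^{\rm H}$ (and the endpoints are attained at genuine high frequencies, e.g.\ $(0,\tfrac{\pi}{3})$ and $(\tfrac{\pi}{2},\tfrac{\pi}{2})$).

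Third, I would solve the minimax. Writing $a=\tfrac56$ and $b=\tfrac{16}{9}$, and noting $1-\omega s$ is monotone in $s$, we have $\max_{\boldsymbol{\theta}\in T^{\rm H}}|1-\omega\widetilde{Q}\widetilde{A}_s|=\max\{|1-\omega a|,\,|1-\omega b|\}$. The optimal $\omega$ equioscillates, $1-\omega a=\omega b-1$, giving $\omega=\tfrac{2}{a+b}=\tfrac{36}{47}$ and $\mu_{\rm opt}(S_s)=\tfrac{b-a}{a+b}=\tfrac{17}{47}$. The main obstacle is not the minimax step, which is routine, but the second stage: correctly identifying the region $R$ (handling the non-injectivity of $\cos$ on $[-\tfrac{\pi}{2},\tfrac{3\pi}{2})$ and the half-open boundary of $T^{\rm H}$) and then carrying out the case analysis over the several boundary edges, including the re-entrant corner introduced by the $3h$-coarsening frequency split, to certify that the boundary minimum is truly $\tfrac56$ rather than the diagonal value $\tfrac89$.
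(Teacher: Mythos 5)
Your proposal is correct and follows essentially the same route as the paper's proof: substitute $x=\cos\theta_1$, $y=\cos\theta_2$, optimize the resulting cubic polynomial over the same L-shaped region (the paper's $\mathcal{D}$ equals your $R$) via the unique interior critical point $(0,0)$ and the three symmetry-distinct boundary edges, obtaining the extreme values $\tfrac{5}{6}$ and $\tfrac{16}{9}$, and then equioscillate to get $\omega=\tfrac{36}{47}$ and $\mu_{\rm opt}=\tfrac{17}{47}$. Your factorization $4+2x+2y+xy=(2+x)(2+y)$ is a pleasant simplification that the paper does not exploit (it works with $g(x,y)$ unfactored), but it does not change the structure of the argument; just be sure, as the paper does, to also record the boundary maxima ($\tfrac{25}{18}$, $\tfrac{3}{2}$, $\tfrac{25}{18}$ after the $\tfrac{2}{9}$ scaling) so the claim that the global maximum is the interior value $\tfrac{16}{9}$ is fully justified.
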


Before giving the proof, we comment that the optimal smoothing for  $S_s= I-\omega  QA_s $ with standard coarsening is $\frac{1}{3}$, as shown in \cite{CH2021addVanka}.  From Theorem \ref{thm:mass-Laplace-opt-mu}, we see that  the optimal  smoothing factor of  $\frac{17}{47}\approx 0.362$   for coarsening by three  is very close to   $\frac{1}{3}=0.333$.
\begin{proof}
Let $x=\cos \theta_1$, $y=\cos \theta_2$.
Then,  formula \eqref{eq:AM-MA-symbol} can be rewritten as $\widetilde{Q} \widetilde{A}_s=\frac{2}{9}g(x,y)$, where 
\begin{equation}\label{eq:QA-symbol}
g(x,y) = (2-x-y)(4+2x+2y+xy).
\end{equation} 
For $\boldsymbol{\theta}\in T^{\rm H}$,  see \eqref{eq:theta-domain},  it is easy to show that $(x,y) \in [-1,1]\times [-1,\frac{1}{2}] \bigcup [-1,\frac{1}{2}]\times [\frac{1}{2},1]=:\mathcal{D}$.  To find   minimum and maximum of $\widetilde{Q} \widetilde{A}_s$ with $\boldsymbol{\theta} \in T^{\rm H}$, we start by computing the partial derivatives of $g(x,y)$:
 \begin{align*}
 g'_x &=  -(2+y)(2x+y),\\
 g'_y  &=  -(2+x)(2y+x).
 \end{align*}
Let $g'_x=g'_y=0$ with $\boldsymbol{\theta} \in T^{\rm H}$. We have $x=y=0$, that is, $(\cos\theta_1,\cos\theta_2)=(0,0)$, and  $g(0,0)=8$ which might be an extreme value.

Next, we only to find the extreme values of $g(x,y)$ at the boundary of $\mathcal{D}$. However, due to the symmetry of \eqref{eq:QA-symbol},  we only need to consider the following three boundaries:  
\begin{equation*}
\partial \mathcal{D}_1 =\{-1\} \times [-1,1], \, \partial \mathcal{D}_2=\{1\} \times \left[-1,\frac{1}{2}\right], \,
\partial \mathcal{D}_3 =\left [\frac{1}{2},1\right]\times \left\{\frac{1}{2}\right\}. 
\end{equation*} 

\begin{enumerate}
\item For $ (x,y)\in \partial \mathcal{D}_1$, 
\begin{equation*}
g(x,y) =g(-1,y) = (3-y)(2+y).
\end{equation*} 
For  $y \in[-1,1]$, we have  
\begin{align*}
g(-1,y) _{\rm max} &= g(-1,1/2)  = \frac{25}{4},\\ 
g(-1,y) _{\rm min} &= g(-1,-1)  =4.
\end{align*}

\item For $ (x,y)\in \partial \mathcal{D}_2$, 
\begin{equation*}
g(x,y) =g(1,y) = 3(1-y)(2+y).
\end{equation*} 
For  $y \in[-1,1/2]$, we have  
\begin{align*}
g(1,y) _{\rm max} &= g(1,-1/2)  = \frac{27}{4},\\
g(1,y) _{\rm min} &= g(1,1/2)  =\frac{15}{4}.
\end{align*}

\item For $ (x,y)\in \partial \mathcal{D}_3$, 
\begin{equation*}
g(x,y) =g(x,1/2) = \frac{5}{2}(3/2-x)(2+x).
\end{equation*} 
For  $x \in[1/2,1]$, we have  
\begin{align*}
g(x,1/2) _{\rm max} &= g(1/2,1/2)  = \frac{25}{4},\\
g(x,1/2) _{\rm min} &= g(1,1/2)  =\frac{15}{4}.
\end{align*}
\end{enumerate}
Based on the above discussions, when  $\boldsymbol{\theta}\in T^{\rm H}$, $g(x,y)_{\rm max} =g(0,0)=8$ and $g(x,y)_{\rm min}=g(1,1/2)=\frac{15}{4}$. It follows that when  $\boldsymbol{\theta}\in T^{\rm H}$,
\begin{equation*} 
 (\widetilde{Q} \widetilde{A}_s)_{\rm max} =\frac{2}{9}\times 8=\frac{16}{9}, \quad   (\widetilde{Q} \widetilde{A}_s)_{\rm min} =\frac{2}{9}\times\frac{15}{4}=\frac{5}{6}.
\end{equation*} 
Thus,
\begin{equation*}
\max_{\boldsymbol{\theta} \in T^{\rm {H}}} |1-\omega \widetilde{Q} \widetilde{A}_s|=\max \left\{\left|1- \frac{16}{9}\omega\right|, \left|1- \frac{5}{6}\omega\right| \right\}.
\end{equation*} 
To minimize $\max_{\boldsymbol{\theta} \in T^{\rm {H}}} |1-\omega \widetilde{Q} \widetilde{A}_s|$ over $\omega$, it requires that
\begin{equation*}
\left|1- \frac{16}{9}\omega \right| =\left|1- \frac{5}{6}\omega\right|, 
\end{equation*} 
which gives $\omega=\frac{2}{16/9+5/6}=\frac{36}{47}$. Furthermore, 
\begin{equation*}
\mu_{\rm opt}=1-\frac{5}{6}\times \frac{36}{47}=\frac{17}{47},
\end{equation*}
completing the proof.
 \end{proof}
Based on mass matrix, $Q$, we  proposed  in \cite{YH2021massStokes} three mass-based block-structured multigrid relaxation schemes with standard coarsening for the Stokes equations. Here, we  consider coarsening by three with the same mass-based block-structured relaxation schemes for the Stokes equations. We are interested in deriving optimal smoothing factors for these relaxation schemes with coarsening by three.  Since  the Laplacian  appears in Stokes equations,  Theorem \ref{thm:mass-Laplace-opt-mu} is very useful to carry out our analysis of   mass-based block-structured multigrid relaxation schemes for the Stokes equations in the next subsections. 
 
For simplicity, let  $m_s=(\widetilde{Q}/h^2)^{-1}$ and $m(\boldsymbol{\theta})=\sin^{2}(\theta_1/2)+\sin^{2}(\theta_2/2)$. Then, from  \eqref{eq:AM-MA-symbol} and \eqref{eq:QA-min-max-values}, we have  
 \begin{equation}\label{eq:mr-range-high}
 \widetilde{Q}\widetilde{A}_s=\frac{4m}{m_s}=:m_r \in \left [\frac{5}{6}, \frac{16}{9}\right]\quad  \text{for}\,\, \boldsymbol{\theta} \in T^{\rm H}.
 \end{equation}

It can be shown that the symbol of $\mathcal{L}$ defined in \eqref{eq:Lh-operator}  is 
\begin{equation*} 
   \widetilde{\mathcal{L}}(\theta_1,\theta_2) =\frac{1}{h^2}\begin{pmatrix}
      4 m  & 0 & i 2h \sin\frac{\theta_1}{2}  \\
      0 &  4m  & i 2h \sin\frac{\theta_2}{2} \\
      -i 2h \sin\frac{\theta_1}{2}   &  -i 2h \sin\frac{\theta_2}{2} &0
    \end{pmatrix}.
\end{equation*}

\subsection{Mass-based distributive relaxation}
 We review distributive relaxation following \cite{MR558216,oosterlee2006multigrid}. To relax  $\mathcal{L} \boldsymbol{ x} = b$, we  consider a transformed system $\mathcal{L}  \mathcal{P}  \hat{ \boldsymbol{ x}}= b$, where $ \mathcal{P} \hat{ \boldsymbol{ x}}= \boldsymbol{ x}$. Then, we can solve the new system with  coefficient matrix  $\mathcal{L}  \mathcal{P}$ efficiently.  Here, $ \mathcal{P} $ is given by
 \begin{equation*}
 \mathcal{P} =\begin{pmatrix}
      I_{h} & 0 &  (\partial_{x_1})_{h/2}\\
      0 & I_{h} & (\partial_{x_2})_{h/2} \\
      0  &0  & \Delta_{h}
    \end{pmatrix}.
  \end{equation*}
The  discrete matrix form  of  $\mathcal{P}$ is
\begin{equation*}
  \mathcal{P} =  \begin{pmatrix}
      I & B^{T}\\
      0 & -A_p\\
    \end{pmatrix},
\end{equation*}
where $-A_p$ is the standard five-point stencil of the Laplacian operator discretized at cell centers.  It follows that
\begin{equation}\label{DWJ-system}
      \mathcal{K}=\mathcal{L} \mathcal{P} =\begin{pmatrix}
      -\Delta_{h} & 0 &  0\\
      0 & -\Delta_{h} & 0 \\
      -(\partial_{x_1})_{h/2}  &-(\partial_{x_2})_{h/2}  & -\Delta_{h}
    \end{pmatrix}.
  \end{equation}
We can apply  block relaxation to the transformed system  $\mathcal{K} \hat{ \boldsymbol{ x}}= b$, for example, distributive weighted-Jacobi  relaxation \cite{he2018local} and distributive Gauss-Seidel relaxation \cite{MR558216,oosterlee2006multigrid}.  The former is simple, but has a convergence factor of $0.6$ \cite{he2018local}.   The latter is not suitable for parallel computation although it  has a two-grid convergence factor of 0.4 shown in \cite{he2018local}.    Here, we consider mass-based distributive relaxation (refer to $Q$-DR)  proposed in \cite{YH2021massStokes}, where $\mathcal{K}$ is approximated by $\mathcal{M}_D$  given by 

\begin{equation} \label{eq:DWJ-Precondtioner}
   \mathcal{M}_D =  \begin{pmatrix}
       \alpha_D C & 0\\
      B &   \alpha_D E \\
    \end{pmatrix}
\end{equation}
with
\begin{equation}\label{eq:C-inverse-Q}
C^{-1}  = 
\begin{pmatrix}
Q &  0\\
0 & Q
\end{pmatrix}
\end{equation}
 and $E=Q^{-1}_p$, where $Q_p$ is  the discrete mass matrix for the pressure unknowns.

It can be shown that the error propagation operator for the distributive relaxation scheme is given by  $\mathcal{S}_D=I-\omega_D \mathcal{P}\ \mathcal{M}^{-1}_D\mathcal{L}$.   Next, we examine the smoothing factor for $\mathcal{S}_D$.

  The symbol of operator $\mathcal{K}=\mathcal{L}\mathcal{P}$, see \eqref{DWJ-system},  is given by
\begin{equation*} 
  \widetilde{\mathcal{K}}(\theta_1,\theta_2) =\frac{1}{h^2}\begin{pmatrix}
       4m(\boldsymbol{\theta})& 0 & 0  \\
      0 &  4m(\boldsymbol{\theta})& 0 \\
      -i 2h \sin\frac{\theta_1}{2}   &  -i 2h \sin\frac{\theta_2}{2} & 4m(\boldsymbol{\theta})
    \end{pmatrix}.
\end{equation*}
The symbol of the block relaxation operator \eqref{eq:DWJ-Precondtioner} with $C^{-1}$ given by the mass approximation \eqref{eq:C-inverse-Q} is
  \begin{equation*} 
   \widetilde{\mathcal{M}}_{D}(\theta_1,\theta_2) =\frac{1}{h^2}\begin{pmatrix}
        \alpha_D m_s(\boldsymbol{\theta})  & 0 & 0  \\
      0 &    \alpha_D m_s(\boldsymbol{\theta}) & 0 \\
      -i 2h \sin\frac{\theta_1}{2}   & -i 2h \sin\frac{\theta_2}{2} & \alpha_D m_s(\boldsymbol{\theta})
    \end{pmatrix}.
\end{equation*}
Then, the eigenvalues of  $\mathcal{\widetilde{S}}_{D}( \alpha_D,\omega_{D},\boldsymbol{\theta})=I- \omega_{D}\widetilde{ \mathcal{P} } \widetilde {\mathcal{M}}_{D}^{-1}\widetilde{\mathcal{L}}$ are
$1-\omega_{D}\frac{4m(\boldsymbol{\theta})}{ \alpha_D m_s(\boldsymbol{\theta})}.$ Now, we are able to give the optimal smoothing factor for $Q$-DR. 
\begin{theorem}\label{thm:QDRsmoothing-theorem}
The optimal smoothing factor for $Q$-DR with coarsening by three is
  \begin{equation*} 
   \mu_{{\rm opt},D}=\min_{ \alpha_D, \omega_{D}}\max_{ \boldsymbol{\theta}\in T^{{\rm H}}}\{ \rho(\mathcal{\widetilde{S}}_{D}( \alpha_D,\omega_{D},\boldsymbol{\theta}))\}=\frac{17}{47}\approx 0.362,
\end{equation*}
where the minimum is uniquely achieved at  $\frac{\omega_{D}}{ \alpha_D}=\frac{36}{47}$.
\end{theorem}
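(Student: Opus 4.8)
The plan is to reduce the two-parameter minimization over $(\alpha_D,\omega_D)$ to the single-parameter scalar problem already solved in Theorem \ref{thm:mass-Laplace-opt-mu}. The crucial observation is that the symbol $\widetilde{\mathcal{S}}_D$ carries a single repeated eigenvalue that depends on $\alpha_D$ and $\omega_D$ only through their ratio, so the optimization collapses onto exactly the scalar objective $|1-\omega\, m_r|$ treated for the Laplacian.

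First I would record the spectral radius explicitly. The preceding computation already furnishes the eigenvalues of $\widetilde{\mathcal{S}}_D$ as $1-\omega_D\frac{4m}{\alpha_D m_s}$; since all three coincide, the spectral radius is the modulus of this common value. Using $\widetilde{Q}\widetilde{A}_s=\frac{4m}{m_s}=m_r$ from \eqref{eq:mr-range-high}, I would write
\[
\rho\big(\widetilde{\mathcal{S}}_D(\alpha_D,\omega_D,\boldsymbol{\theta})\big)=\left|1-\frac{\omega_D}{\alpha_D}\,m_r\right|,
\]
which makes transparent that the dependence on the two parameters enters only through the combined quantity $\omega:=\omega_D/\alpha_D$.

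Next I would substitute this single combined parameter and observe that
\[
\mu_{{\rm opt},D}=\min_{\omega>0}\;\max_{\boldsymbol{\theta}\in T^{\rm H}}\left|1-\omega\,m_r\right|
\]
is \emph{verbatim} the minimization solved in Theorem \ref{thm:mass-Laplace-opt-mu}, because $m_r$ ranges over the same interval $\left[\frac{5}{6},\frac{16}{9}\right]$ as $\boldsymbol{\theta}$ varies over $T^{\rm H}$. Invoking that theorem immediately yields the value $\frac{17}{47}$, attained at $\omega=\frac{36}{47}$, i.e.\ $\frac{\omega_D}{\alpha_D}=\frac{36}{47}$. For uniqueness I would recall that the optimal $\omega$ in Theorem \ref{thm:mass-Laplace-opt-mu} is pinned down by the equioscillation condition $\left|1-\frac{16}{9}\omega\right|=\left|1-\frac{5}{6}\omega\right|$, which has the single positive solution $\omega=\frac{36}{47}$; hence the minimizing \emph{ratio} is unique, even though $\alpha_D$ and $\omega_D$ individually are not determined.

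I do not anticipate a serious obstacle: once the repeated-eigenvalue structure is granted, the argument is essentially a one-line reduction to Theorem \ref{thm:mass-Laplace-opt-mu}. The only point needing care is justifying that the three eigenvalues genuinely coincide. This follows because $\mathcal{S}_D$ is similar (via $\mathcal{P}^{-1}$) to $I-\omega_D\widetilde{\mathcal{M}}_D^{-1}\widetilde{\mathcal{K}}$, and both $\widetilde{\mathcal{M}}_D$ and $\widetilde{\mathcal{K}}$ are lower triangular with constant diagonal entries $\frac{\alpha_D m_s}{h^2}$ and $\frac{4m}{h^2}$, respectively; their product is therefore triangular with the single diagonal value $\frac{4m}{\alpha_D m_s}$. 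Verifying this triangular structure is the one short computational step I would include to keep the proof self-contained.
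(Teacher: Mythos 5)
Your proof is correct and takes essentially the same route as the paper: both reduce the two-parameter problem to the scalar result of Theorem \ref{thm:mass-Laplace-opt-mu} by noting that every eigenvalue of $\widetilde{\mathcal{S}}_{D}$ equals $1-\frac{\omega_{D}}{\alpha_{D}}\,\widetilde{Q}\widetilde{A}_{s}$, so only the ratio $\omega_{D}/\alpha_{D}$ matters and the optimum $\frac{17}{47}$ at $\frac{\omega_{D}}{\alpha_{D}}=\frac{36}{47}$ follows immediately. Your extra justification of the repeated eigenvalue (similarity via $\widetilde{\mathcal{P}}^{-1}$ and the triangular structure of $\widetilde{\mathcal{M}}_{D}^{-1}\widetilde{\mathcal{K}}$) and the explicit equioscillation argument for uniqueness simply spell out details the paper asserts in the text preceding the theorem.
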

 \begin{proof}
Since all eigenvalues of  $\mathcal{\widetilde{S}}_{D}$  are  $1-\omega_D\frac{4m }{ \alpha_D m_s }=1-\frac{\omega_D}{ \alpha_D} \widetilde{Q}\widetilde{A}_s$, using Theorem \ref{thm:mass-Laplace-opt-mu}, we know that $\mu_{\rm opt}(I- \frac{\omega_{D}}{ \alpha_D}QA_s)=\frac{17}{47}$  with $\frac{\omega_D}{ \alpha_D}=\frac{36}{47}$.  Thus, 
  $ \displaystyle \min_{\alpha_D, \omega_{D}} \max_{\boldsymbol{\theta}\in T^{{\rm H}}}\{\rho(\mathcal{\widetilde{S}}_{D}( \alpha_D,\omega_{D},\boldsymbol{\theta}))\}=\frac{17}{47}.$
 \end{proof}

In \cite{YH2021massStokes}, for the standard coarsening, we have shown that the optimal smoothing factor for $Q$-DR for MAC discretization of the Stokes equations is $\frac{1}{3}$.  From Theorem \ref{thm:QDRsmoothing-theorem}, we see that  the optimal smoothing factor for coarsening by three  is competitive with  that for  standard coarsening. 

Assume that the cost of one  cycle  standard multigrid (using one pre-smoothing and no postsmoothing step) is $W$ and the cost of coarsening by three is roughly $W/3$.  To achieve  tolerance $\epsilon$, the total  cost of standard coarsening and coarsening by three are 
\begin{equation*}
T_1=W {\rm log}_{1/3} \epsilon, \quad T_2=\frac{W}{3}{\rm log}_{17/47} \epsilon,
\end{equation*}
respectively.  Then, 
\begin{equation}\label{eq:ration-computation}
\frac{T_1}{T_2} \approx 2.78. 
\end{equation}
So coarsening by three is computationally beneficial.
 

\subsection{Mass-based Braess-Sarazin  relaxation}
We consider mass-based Braess-Sarazin relaxation scheme proposed in \cite{YH2021massStokes}, named $Q$-BSR, where $\mathcal{M}_h$ in  \eqref{eq:general relaxation-form} is given by
\begin{equation}\label{eq:Precondtion}
   \mathcal{M}_B=  \begin{pmatrix}
      \alpha_B C & B^{T}\\
     B & 0\\
    \end{pmatrix},
\end{equation}
in which  $C^{-1}$ is defined in \eqref{eq:C-inverse-Q}, and $\alpha_B>0$ is  to be determined.  In \eqref{eq:general relaxation-form}, let $\delta \boldsymbol{ x} =\mathcal{M}_B^{-1}(b_h- \mathcal{L}_{h} \boldsymbol{ x}_k)$. Then, the update $\delta \boldsymbol{ x}=(\delta_{\boldsymbol{u}}, \delta_p)$  is given by  
\begin{eqnarray}
  (BC^{-1}B^{T})\delta_p&=&BC^{-1} r_{\boldsymbol{u}}-\alpha_B r_{p}, \label{eq:solution-of-precondtion}\\
  \delta_{\boldsymbol{u}}&=& \alpha_B^{-1}C^{-1}(r_{\boldsymbol{u}}-B^{T}\delta_p),\nonumber
\end{eqnarray}
where $(r_{\boldsymbol{u}},r_{p})=b_h-\mathcal{L}_h\boldsymbol{ x}_k$.

For the Schur complement system \eqref{eq:solution-of-precondtion}, we will consider exact solve  and inexact solve discussed.

 \subsubsection{ Exact Braess-Sarazin relaxation} 
We first consider solving \eqref{eq:solution-of-precondtion} exactly, and  derive  optimal smoothing factor for the corresponding exact  $Q$-BSR.   
Using $m_s=(\widetilde{Q}/h^2)^{-1}$,  the symbol of $\mathcal{M}_{B}$ is  
\begin{equation*} 
   \widetilde{\mathcal{M}}_{B}(\theta_1,\theta_2) =\frac{1}{h^2}\begin{pmatrix}
       \alpha_B m_s & 0 & i 2h \sin\frac{\theta_1}{2}  \\
      0 &   \alpha_B  m_s & i 2h \sin\frac{\theta_2}{2} \\
      -i 2h \sin\frac{\theta_1}{2}   &  -i 2h \sin\frac{\theta_2}{2} &0
    \end{pmatrix}.
\end{equation*}
It can be easily shown that  the determinant of $\widetilde{\mathcal{L}}-\lambda \widetilde{\mathcal{M}}_{B}$ is 
\begin{equation*}
 4 \alpha_B  m m_s \big(\lambda-1\big)^2\left (\lambda-\frac{4 m }{\alpha_B m_s}\right).
\end{equation*}
It follows that the eigenvalues of $\widetilde {\mathcal{M}}_{B}^{-1}\mathcal{\widetilde{ L}}$ are  $1, 1$,  and $\displaystyle\frac{4m }{\alpha_B m_s}$. Note that $\displaystyle\frac{4m }{\alpha_B m_s}$ is the  eigenvalue of $\widetilde{Q}\widetilde{A}_s$, and $\frac{5}{6}<1< \frac{16}{9}$.  From Theorem \ref{thm:mass-Laplace-opt-mu}, we directly have the following result. 

\begin{theorem} 
The optimal smoothing factor for exact $Q$-BSR with coarsening by three is
  \begin{equation*} 
   \mu_{{\rm opt},B}=  \min_{\alpha_B, \omega_{B}}\max_{ \boldsymbol{\theta}\in T^{{\rm H}}} \{\rho(\mathcal{\widetilde{S}}_{B}(\alpha_B, \omega_{B},\boldsymbol{\theta}))\}=\frac{17}{47},
\end{equation*}
where the minimum is uniquely achieved at $\frac{\omega_{B}}{\alpha_B}=\frac{36}{47}$ with $\omega_B \in [30/47,64/47]$.
\end{theorem}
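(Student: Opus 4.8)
The plan is to exploit the spectral decomposition already established immediately above the statement, so that the bulk of the argument reduces to a direct application of Theorem~\ref{thm:mass-Laplace-opt-mu}. Since the symbol is $\mathcal{\widetilde{S}}_B = I - \omega_B \widetilde{\mathcal{M}}_B^{-1}\widetilde{\mathcal{L}}$ and the eigenvalues of $\widetilde{\mathcal{M}}_B^{-1}\widetilde{\mathcal{L}}$ are $1$, $1$, and $\frac{4m}{\alpha_B m_s}$, the three eigenvalues of $\mathcal{\widetilde{S}}_B$ are $1-\omega_B$ with multiplicity two and $1 - \frac{\omega_B}{\alpha_B}\widetilde{Q}\widetilde{A}_s$, where I use $\frac{4m}{m_s} = \widetilde{Q}\widetilde{A}_s$ from \eqref{eq:mr-range-high}. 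Hence $\rho(\mathcal{\widetilde{S}}_B)$ is the larger of $|1-\omega_B|$ and $\left|1 - \frac{\omega_B}{\alpha_B}\widetilde{Q}\widetilde{A}_s\right|$.

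First I would separate the maximization over $\boldsymbol{\theta}\in T^{\rm H}$. Because $|1-\omega_B|$ carries no $\boldsymbol{\theta}$-dependence,
\[
\max_{\boldsymbol{\theta}\in T^{\rm H}} \rho(\mathcal{\widetilde{S}}_B) = \max\left\{ |1-\omega_B|,\ \max_{\boldsymbol{\theta}\in T^{\rm H}}\left|1 - \frac{\omega_B}{\alpha_B}\widetilde{Q}\widetilde{A}_s\right| \right\}.
\]
The second term depends on the parameters only through the ratio $r := \omega_B/\alpha_B$, and Theorem~\ref{thm:mass-Laplace-opt-mu} states exactly that $\min_{r}\max_{\boldsymbol{\theta}\in T^{\rm H}}|1 - r\widetilde{Q}\widetilde{A}_s| = \frac{17}{47}$, attained only at $r = \frac{36}{47}$. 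This immediately yields the lower bound $\max_{\boldsymbol{\theta}}\rho(\mathcal{\widetilde{S}}_B)\ge \frac{17}{47}$ for every admissible $(\alpha_B,\omega_B)$, with the $\boldsymbol{\theta}$-dependent term reaching the bound precisely when $r = \frac{36}{47}$.

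Next I would account for the constant eigenvalue to pin down when the global minimum $\frac{17}{47}$ is actually achieved. Fixing $r = \frac{36}{47}$, the overall maximum equals $\frac{17}{47}$ iff $|1-\omega_B|\le \frac{17}{47}$, i.e. $\omega_B \in [30/47, 64/47]$; if $r \ne \frac{36}{47}$ the $\boldsymbol{\theta}$-dependent term alone already exceeds $\frac{17}{47}$, so no value of $\omega_B$ can recover the minimum. Together these observations deliver the value $\mu_{{\rm opt},B} = \frac{17}{47}$, the uniqueness of the ratio $\frac{\omega_B}{\alpha_B} = \frac{36}{47}$, and the admissible window $\omega_B \in [30/47, 64/47]$.

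I do not expect a genuine obstacle, because the entire $\boldsymbol{\theta}$-dependence is confined to the single eigenvalue already handled by Theorem~\ref{thm:mass-Laplace-opt-mu}, while the remaining eigenvalues are flat in $\boldsymbol{\theta}$. The only point requiring care is the bookkeeping for the constant mode: one must note that the optimum is characterized by the ratio $r$ rather than by $\alpha_B$ and $\omega_B$ separately (explaining why only $r$, and not each parameter, is uniquely determined), and check that the interval $[30/47,64/47]$ is nonempty and compatible with $r=\frac{36}{47}$ — for instance $\alpha_B=1$, $\omega_B=\frac{36}{47}$ lies in it.
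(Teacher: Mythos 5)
Your proposal is correct and takes essentially the same approach as the paper: both read off the eigenvalues $1,\,1,\,\frac{4m}{\alpha_B m_s}$ of $\widetilde{\mathcal{M}}_{B}^{-1}\widetilde{\mathcal{L}}$ from the determinant and reduce the optimization directly to Theorem \ref{thm:mass-Laplace-opt-mu}. Your explicit condition $|1-\omega_B|\le \frac{17}{47}$ on the constant eigenvalue merely spells out what the paper compresses into the remark $\frac{5}{6}<1<\frac{16}{9}$ (equivalently $\alpha_B\in[5/6,\,16/9]$ under the optimal ratio), and it yields the same window $\omega_B\in[30/47,\,64/47]$.
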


We point out that for  standard coarsening,  the optimal smoothing factor for exact $Q$-BSR for MAC discretization of the Stokes equations is $\frac{1}{3}$  \cite{YH2021massStokes}.  Thus, using \eqref{eq:ration-computation} for exact $Q$-BSR, coarsening by three is better than standard coarsening.


 \subsubsection{Inexact Braess-Sarazin  relaxation}
Solving the Schur complement system  \eqref{eq:solution-of-precondtion} with coefficient matrix $BC^{-1}B^{T}$ directly is expensive.  Many studies have shown that a good approximation  for \eqref{eq:solution-of-precondtion} is sufficient \cite{MR1810326}, for example, using a simple sweep of a Gauss-Seidel \cite{MR1049395} or weighted Jacobi iteration \cite{he2018local,YH2021massStokes}.   Moreover, our previous work \cite{YH2021massStokes} exhibits that for standard coarsening inexact $Q$-BSR can achieve the  same convergence factor as the exact  $Q$-BSR  for MAC discretization of the Stokes equations. Thus, we also consider mass-based  inexact Braess-Sarazin relaxation (refer to $Q$-IBSR), where we apply a single sweep of weighted ($\omega_J$) Jacobi iteration to approximate the solution of \eqref{eq:solution-of-precondtion}.  Theoretical analysis of the optimal smoothing factor is challenging due to complex eigenvalues of $\mathcal{\widetilde{S}}$, the inexact version, so we numerically study the performance of  inexact BSR under the condition that $\frac{\omega_B}{\alpha_B}=\frac{36}{47}$ because $1-\frac{\omega_B}{\alpha_B}\frac{m}{m_s}$ is an eigenvalue for both exact and inexact version. Our findings in Section \ref{sec:Numer}  show that inexact  BSR   of a two-grid method can obtain the same convergence  factor, $\frac{17}{47} $, as the exact version  with   $\omega_B=1, \alpha_B=\frac{47}{36}\omega_B$ and $ \omega_J=0.9$.

\subsection{Mass-based $\sigma$-Uzawa  relaxation}\label{Uzawa-type}

  Uzawa-type relaxation is  a popular family of algorithms for solving saddle-point systems \cite{MR1302679,MR833993}. We consider  mass-based $\sigma$-Uzawa, called $Q$-$\sigma$-Uzawa, proposed in \cite{YH2021massStokes}, that is, $\mathcal{M}_h$ in \eqref{eq:general relaxation-form} is taken to be
\begin{equation*} 
   \mathcal{M}_{U} =  \begin{pmatrix}
      \alpha_U C & 0\\
     B & -\sigma^{-1}I\\
    \end{pmatrix},
\end{equation*}
where $  C^{-1}$ is  defined in \eqref{eq:C-inverse-Q}.

To identify the optimal smoothing factor for $\mathcal{S}_U=I-\omega_U \mathcal{M}_U^{-1}\mathcal{L}$, we first compute the eigenvalues of  $ \widetilde{\mathcal{M}}_U^{-1} \widetilde{\mathcal{L}}$. The symbol of  $\mathcal{M}_{U}$  is
\begin{equation*} 
   \widetilde{\mathcal{M}}_{U}(\theta_1,\theta_2) =\frac{1}{h^2}\begin{pmatrix}
        \alpha_{U}m_s & 0 & 0  \\
      0 &   \alpha_{U}m_s  & 0 \\
      -i 2h \sin\frac{\theta_1}{2}   &  -i 2h \sin\frac{\theta_2}{2} &-\sigma^{-1}h^2
    \end{pmatrix}.
    \end{equation*}
It can be shown that  the determinant of $\widetilde{\mathcal{L}}-\lambda \widetilde{\mathcal{M}}_{U}$ is 
\begin{equation}\label{eq:det-Uzawa}
\frac{ (\alpha_{U}m_s)^{2}}{\sigma} \left(\lambda-\frac{4m}{m_s\alpha_{U}}\right)\left(\lambda^{2}-\frac{1+\sigma}{\alpha_{U}m_s}4m\lambda+\frac{4m\sigma}{\alpha_{U}m_s}\right) .
\end{equation}
Recall that  $m_r=\frac{4m}{m_s}$, see \eqref{eq:mr-range-high}. We rewrite \eqref{eq:det-Uzawa}  as
\begin{equation}\label{eq:sigma-Uzawa-roots}
 \frac{ (\alpha_{U}m_s)^{2}}{\sigma} \left(\lambda-\frac{m_r}{ \alpha_{U}}\right)\left(\lambda^{2}-\frac{(1+\sigma)m_r }{\alpha_{U} }\lambda+\frac{m_r\sigma}{\alpha_{U} }\right).
\end{equation}
From \eqref{eq:sigma-Uzawa-roots}, we know that the eigenvalues of $\widetilde{\mathcal{M}}_{U}^{-1}\widetilde{\mathcal{L}}$ are $\lambda_{1,2}$, the two roots of
\begin{equation}\label{quadratic-function-root-U}
T(\lambda)=\lambda^{2}-\frac{(1+\sigma)m_r}{\alpha_{U}}\lambda+\frac{m_r\sigma}{\alpha_{U}},
\end{equation}
 and  $\lambda_{3}=\frac{m_r}{\alpha_{U}}$.

Our goal is to theoretically solve
\begin{equation}\label{eq:quadratic-smoothing-form}
\min_{(\alpha_U,\sigma,\omega_U)}\max_{\boldsymbol{\theta}\in T^{\rm H}}\left \{|1-\omega_U\lambda_{1,2}|, |1-\omega_U\lambda_3|\right \}.
\end{equation} 
For $\lambda_3$, from Theorem \ref{thm:QDRsmoothing-theorem} we have known  that the corresponding optimal smoothing factor is $\frac{17}{47}$. Thus, the optimal  result  of \eqref{eq:quadratic-smoothing-form} is not less than $\frac{17}{47}$. The theoretical result is given below. We note that it    is  less effective than the other two discussed previously. 

\begin{theorem} \label{thm:Uzawa-opt-mu}
The optimal smoothing factor for $Q$-$\sigma$-Uzawa relaxation with coarsening by three is   
 \begin{equation*} 
  \mu_{{\rm opt}, U}= \min _{(\alpha_{U},\omega_{U},\sigma)}\max_{ \boldsymbol{\theta}\in T^{{\rm H}}}\left\{\left|1-\omega_{U}\lambda_{3}\right|,\,|1-\omega_{U}\lambda_{1,2} |  \right\}
  =  \sqrt{\frac{17}{47}}\approx 0.601,
\end{equation*}
%
 with
\begin{eqnarray*}
     \frac{225}{47(16 \mu_{{\rm opt},U}-1) }\leq & \omega_{U} &\leq \frac{30}{47(1-\mu_{{\rm opt},U})}, \\
     &\alpha_{U}&=\frac{376\omega^2_U}{9(47\omega_U-15)}, \\
   &\sigma&= \frac{15}{47 \omega_{U}-15}. 
\end{eqnarray*}
 \end{theorem}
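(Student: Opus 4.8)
The plan is to regard the spectral radius $\rho(\widetilde{\mathcal{S}}_U)$ as a function of the single scalar $m_r=\widetilde{Q}\widetilde{A}_s$, since by \eqref{eq:det-Uzawa}--\eqref{quadratic-function-root-U} the three eigenvalues of $\widetilde{\mathcal{S}}_U$ depend on $\boldsymbol\theta$ only through $m_r$, and by \eqref{eq:mr-range-high} they range exactly over $m_r\in[\tfrac56,\tfrac{16}9]$ as $\boldsymbol\theta\in T^{\rm H}$. Writing $\beta=\omega_U/\alpha_U$ and $w_j=\omega_U\lambda_j$, the smoothing eigenvalues are $1-w_3$ with $w_3=\beta m_r$, and $1-w_{1,2}$ with $w_{1,2}$ the roots of $w^2-(1+\sigma)\beta m_r\,w+\sigma\beta\omega_U m_r=0$ obtained by scaling \eqref{quadratic-function-root-U}. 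The first thing I would record is the product identity $(1-w_1)(1-w_2)=1-(w_1+w_2)+w_1w_2=1-\beta\gamma\,m_r$, where $\gamma:=1+\sigma(1-\omega_U)$; this single computation drives the whole argument.

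For the lower bound I would show that for \emph{every} admissible triple $(\alpha_U,\omega_U,\sigma)$ and every $m_r$,
\[
\rho(\widetilde{\mathcal{S}}_U)\ \ge\ \max\bigl(|1-w_1|,|1-w_2|\bigr)\ \ge\ \sqrt{\,|(1-w_1)(1-w_2)|\,}\ =\ \sqrt{\,|1-\beta\gamma\,m_r|\,},
\]
using only the elementary fact $\max(|p|,|q|)^2\ge|pq|$. Taking the maximum over $m_r$ and pulling the root out gives $\max_{m_r}\rho\ge\sqrt{\max_{m_r}|1-\beta\gamma\,m_r|}$. Since $\beta\gamma$ is just some real constant, Theorem \ref{thm:mass-Laplace-opt-mu}, whose optimization reads $\min_c\max_{m_r\in[5/6,16/9]}|1-c\,m_r|=\tfrac{17}{47}$, forces $\max_{m_r}|1-\beta\gamma\,m_r|\ge\tfrac{17}{47}$ no matter what $\beta\gamma$ equals. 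Hence $\mu_{{\rm opt},U}\ge\sqrt{17/47}$; this explains both the square root and why $Q$-$\sigma$-Uzawa is weaker than $Q$-BSR and $Q$-DR.

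For achievability I would impose two scalar conditions that make the bound tight: $\beta\gamma=\tfrac{36}{47}$ (so that, as in Theorem \ref{thm:mass-Laplace-opt-mu}, $\max_{m_r}|1-\beta\gamma\,m_r|=\tfrac{17}{47}$ is attained at the endpoints), and $(1+\sigma)\beta=\tfrac98$ (so that at $m_r=\tfrac{16}9$ one has $w_1+w_2=2$, giving $1-w_1=-(1-w_2)$ and hence $\rho=\sqrt{17/47}$ there). Solving these two equations for $\sigma$ and $\beta$ in terms of $\omega_U$ reproduces exactly the stated $\sigma=\tfrac{15}{47\omega_U-15}$ and $\alpha_U=\omega_U/\beta=\tfrac{376\omega_U^2}{9(47\omega_U-15)}$. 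Under these choices $w_1+w_2=\tfrac98 m_r$ and $w_1w_2=\tfrac{135}{376}m_r$ are \emph{independent of} $\omega_U$, so the $\lambda_{1,2}$ contribution is a fixed function of $m_r$: on the complex branch ($m_r<m^\ast=\tfrac{160}{141}$) it equals $\sqrt{1-\tfrac{36}{47}m_r}$, decreasing from $\sqrt{17/47}$ at $m_r=\tfrac56$ to $\tfrac{17}{47}$ at $m^\ast$, and on the real branch it equals $\tfrac12\bigl(|2-\tfrac98 m_r|+\sqrt{(\tfrac98 m_r)^2-\tfrac{135}{94}m_r}\bigr)$, rising back to $\sqrt{17/47}$ at $m_r=\tfrac{16}9$.

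The remaining work, and the step I expect to be the main obstacle, is the single-variable verification that this real-branch expression never overshoots $\sqrt{17/47}$ on $[m^\ast,\tfrac{16}9]$, which I would handle by squaring and reducing to a polynomial inequality in $m_r$. Given that, $\max_{m_r}\rho_{1,2}=\sqrt{17/47}$, and it only remains to control the $\lambda_3$ factor $|1-\beta m_r|$; being linear in $m_r$, it suffices to enforce it at the two endpoints. Substituting $\beta=\tfrac{9(47\omega_U-15)}{376\omega_U}$ turns $|1-\beta\cdot\tfrac56|\le\mu$ and $|1-\beta\cdot\tfrac{16}9|\le\mu$ (with $\mu=\sqrt{17/47}$) into precisely $\omega_U\ge\tfrac{225}{47(16\mu-1)}$ and $\omega_U\le\tfrac{30}{47(1-\mu)}$, the stated range. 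Combining the parameter-free lower bound with this endpoint-attaining upper bound then yields $\mu_{{\rm opt},U}=\sqrt{17/47}$.
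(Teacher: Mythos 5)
Your proposal is correct, and its lower-bound argument takes a genuinely different, and leaner, route than the paper's appendix. The paper proceeds by regime-splitting in the parameters: Theorem \ref{thm:complex-max-value} controls the complex branch, Corollary \ref{corol:m2-large} and Theorem \ref{U-all-complex} eliminate $m_2>\tfrac{16}{9}$ and $m_2\le\tfrac{5}{6}$ via the weaker bound $\tfrac{\sqrt{34}}{8}$, and a final two-case computation in $a=\tfrac{(1+\sigma)\omega_U}{\alpha_U}$, $b=\tfrac{\omega_U^2\sigma}{\alpha_U}$ shows that $\mu^C\le\sqrt{17/47}$ forces $\mu^R>\sqrt{17/47}$ whenever $a\ne\tfrac98$. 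You collapse all of that into the polynomial identity $(1-w_1)(1-w_2)=1-\beta\gamma\,m_r$ (your $\gamma=1+\sigma(1-\omega_U)$; note that $\beta\gamma=a-b$ in the paper's notation, and that the paper uses the letter $\gamma$ for something else) combined with the elementary bound $\max(|p|,|q|)\ge\sqrt{|pq|}$, so that Theorem \ref{thm:mass-Laplace-opt-mu}, applied to the constant $c=\beta\gamma$, immediately yields the parameter-free bound $\mu_{{\rm opt},U}\ge\sqrt{17/47}$. That is shorter, makes the square root conceptually inevitable, and needs no case analysis at all. What the paper's longer route buys is necessity: its argument shows that any optimal parameters must satisfy \eqref{eq:optimal-condion-1}, whereas your argument establishes the universal lower bound plus sufficiency of the stated family --- which is all the theorem literally claims. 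On achievability the two proofs agree in substance: your two conditions $\beta\gamma=\tfrac{36}{47}$ and $(1+\sigma)\beta=\tfrac98$ are exactly \eqref{eq:optimal-condion-1}, and your endpoint treatment of $\lambda_3$ reproduces \eqref{eq:third-cond} and hence the stated interval for $\omega_U$.

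The one step you leave open is in fact harmless. With $\mu=\sqrt{17/47}$, the real-branch bound $\tfrac12\bigl(2-\tfrac98 m_r+\sqrt{(\tfrac98 m_r)^2-\tfrac{135}{94}m_r}\bigr)\le\mu$ may be squared legitimately, since $2\mu-2+\tfrac98 m_r>0$ for $m_r\ge\tfrac{160}{141}$; after the $m_r^2$ terms cancel, the resulting inequality is linear in $m_r$ with negative slope $\tfrac92\mu-\tfrac{144}{47}$, so its worst case is $m_r=\tfrac{16}{9}$, where it holds with equality. Alternatively, you could simply invoke the monotonicity of $\chi_\pm$ used by the paper at \eqref{eq:R+Max}--\eqref{eq:R-Min} to reduce the real branch to its right endpoint.
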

We comment that parameters $\omega_U=1, \alpha_U=\frac{47}{36}, \sigma =\frac{15}{32}$ are in the domain of the above optimal
parameters.  Our previous work \cite{YH2021massStokes} proved  that  for standard coarsening,  the optimal smoothing factor for $Q-\sigma$-Uzawa relaxation  for MAC discretization of the Stokes equations is $\sqrt{\frac{1}{3}}$.  Note that $\sqrt{\frac{17}{47}}\approx 0.601$ is very close to $\sqrt{\frac{1}{3}}\approx 0.577$. 
Now, we have optimal smoothing factors for the mass-based Braess-Sarazin,   $\sigma$-Uzawa,   and   distributive relaxation schemes. It is clear that  mass-based Braess-Sarazin and distributive relaxation schemes outperform   $\sigma$-Uzawa relaxation in terms of smoothing factor.  Moreover, in Section \ref{sec:Numer}, we see  degradation in convergence of  actual $Q$-$\sigma$-Uzawa  multigrid method, and the proof of Theorem \ref{thm:Uzawa-opt-mu} is tedious. Thus, we  present it  in the Appendix.

\section{Numerical experiments}\label{sec:Numer}
 In this section, we first use optimal parameters obtained from our smoothing analysis to compute two-grid LFA convergence factors $\rho(\nu_1,\nu_2)=:\rho(\nu)$ with $\nu=\nu_1+\nu_2$ defined in \eqref{eq:rho-form}.  We  also  study the influence of different  grid-transfer operators  defined in \eqref{eq:P-2D-stencil-25} and \eqref{eq:R-2D-stencil}  on two-grid methods by LFA. Finally,   multigrid methods (two-grid and $V$-cycles) performances are presented to validate our  LFA predictions. 
 \subsection{LFA results}
We use   optimal parameters  obtained from Section \ref{sec:smoothing-analysis} that minimize the smoothing factor and $h=\frac{1}{81}$ to conduct the test.  We study the influence of different grid-transfer operators defined in \eqref{eq:P-2D-stencil-25} and \eqref{eq:R-2D-stencil} on the performance of two-grid methods. Specifically, we consider four pairs of grid-transfer operators: $(P_h, R_h)=(P_{h,25}, R_{h,1}), (P_{h,25}, R_{h,9}), (P_{h,25}, R_{h,9b}), (P_{h,25}, P^T_{h,25}/9)$. For all two-grid LFA prediction tests,  we use  $\alpha_D=1, \omega_{D}=\frac{36}{47}$ for $Q$-DR,  $\alpha_B=1, \omega_B=\frac{36}{47}$ for $Q$-BSR, and   $\omega_U=1, \alpha_U=\frac{47}{36},\sigma=\frac{15}{32}$  for $Q$-$\sigma$-Uzawa.

In Table \ref{tab:LFA-results-PR-bilinear-injection},  we report two-grid LFA convergence factors using grid-transfer operators  $(P_{h,25}, R_{h,1})$ for three relaxation schemes.  We see that there is a degradation of $\rho_h(\nu)$ compared with  $\mu^{\nu}_{\rm opt}$, especially for $\nu=1$.  This degradation can be mitigated by using more points for the restriction operators, which we observe  in Tables \ref{tab:LFA-results-PR-bilinear-injection9}, \ref{tab:LFA-results-PR-bilinear-9B} and \ref{tab:LFA-results-PR-bilinear-bilinear}, where we use  $(P_h,R_h)=(P_{h,25}, R_{h,9}), (P_{h,25}, R_{h,9b})$, $(P_{h,25}, P^T_{h,25}/9)$, respectively. Overall,  using $(P_h, R_h)=(P_{h,25}, P^T_{h,25}/9)$ gives better results compared with other choices.  From LFA predictions, we see that $Q$-BSR  outperforms the other two relaxation schemes when $\nu$ increases.   The results also suggest that it is important to select  proper grid-transfer operators to design fast multigrid methods, and LFA is helpful to  identify good grid-transfer operators before we do actual numerical tests.  
\begin{table}[H]
 \caption{Two-grid LFA convergence factor, $\rho_h(\nu)$, using $(P_h, R_h)=(P_{h,25}, R_{h,1})$.}
\centering
\begin{tabular}{lccccc}
\hline
Method                       & $\mu_{\rm opt}$        &$\rho_h(1)$      &$\rho_h(2)$     &$\rho_h(3)$    & $\rho_h(4)$  \\ \hline
 
$Q$-DR                          &0.362                          &0.546               &0.222              &0.116      &0.087    \\
$Q$-BSR                        &0.362                           &0.515             &0.245               &0.181      &0.098  \\
$Q$-$\sigma$-Uzawa      &0.601                           &0.642             &0.377                &0.226    &0.165   \\ \hline
\end{tabular}\label{tab:LFA-results-PR-bilinear-injection}
\end{table}

\begin{table}[H]
 \caption{Two-grid LFA convergence factor, $\rho_h(\nu)$, using $(P_h, R_h)=(P_{h,25}, R_{h,9})$.}
\centering
\begin{tabular}{lccccc}
\hline
Method                       & $\mu_{\rm opt}$        &$\rho_h(1)$      &$\rho_h(2)$     &$\rho_h(3)$    & $\rho_h(4)$  \\ \hline
 
$Q$-DR                          &0.362                          &0.419               &0.205              &0.157       &0.126     \\
$Q$-BSR                        &0.362                           &0.361              &0.166               &0.097        &0.073    \\
$Q$-$\sigma$-Uzawa      &0.601                           &0.601              &0.361               &0.217        &0.154   \\ \hline
\end{tabular}\label{tab:LFA-results-PR-bilinear-injection9}
\end{table}

\begin{table}[H]
 \caption{Two-grid LFA convergence factor, $\rho_h(\nu)$,  using $(P_h, R_h)=(P_{h,25}, R_{h,9b})$.}
\centering
\begin{tabular}{lccccc}
\hline
Method                       & $\mu_{\rm opt}$        &$\rho_h(1)$      &$\rho_h(2)$     &$\rho_h(3)$    & $\rho_h(4)$  \\ \hline
 
$Q$-DR                          &0.362                          &0.431               & 0.192               &0.144       &0.117     \\
$Q$-BSR                        &0.362                           &0.361              &0.149                 &0.091        &0.052   \\
$Q$-$\sigma$-Uzawa      &0.601                           &0.601             &0.361                 &0.217        &0.150    \\ \hline
\end{tabular}\label{tab:LFA-results-PR-bilinear-9B}
\end{table} 

\begin{table}[H]
 \caption{Two-grid LFA convergence factor, $\rho_h(\nu)$, using $(P_h, R_h)=(P_{h,25}, P^T_{h,25}/9)$.}
\centering
\begin{tabular}{lccccc}
\hline
Method                       & $\mu_{\rm opt}$        &$\rho_h(1)$      &$\rho_h(2)$     &$\rho_h(3)$    & $\rho_h(4)$  \\ \hline
 
$Q$-DR                          &0.362                          &0.387               &0.257              &0.197      &0.160   \\
$Q$-BSR                        &0.362                           &0.361              &0.161               &0.123     &0.099   \\
$Q$-$\sigma$-Uzawa      &0.601                           &0.601              &0.361               &0.240     &0.197   \\
 \hline
\end{tabular}\label{tab:LFA-results-PR-bilinear-bilinear}
\end{table}

\subsection{Multigrid performance}
We consider  model problems   \eqref{eq:Stokes1} and \eqref{eq:Stokes2} on a unit domain with Dirichlet boundary conditions  with zero  solution, since in \cite{YH2021massStokes},  numerical results of  $Q$-BSR, $Q$-$\sigma$-Uzawa and  $Q$-DR multigrid methods with standard coarsening for the Stokes problems with periodic boundary conditions agree with LFA predictions. 
 The introduction of implementation of MAC scheme for the Stokes systems can be found in \cite{chen2018programming}.  For grid-transfer operators, we choose the combination $(P_h, R_h)=(P_{h,25}, P^T_{h,25}/9)$ defined in  \eqref{eq:P-2D-stencil-25} and \eqref{eq:R-2D-stencil}.   The coarsest grid is  $3\times 3$.  Experimentally measured convergence factors are computed as
\begin{equation*}
\rho^{(k)}_m=\left( \frac{||r_k||}{||r_0||}\right)^{1/k},
\end{equation*}
where $r_k=b_h-\mathcal{L}_h\boldsymbol{z}_k$ is the residual  and $\boldsymbol{z}_k$ is the $k$-th multigrid iteration. In our test, we report  $\rho^{(k)}_m=:\rho_m$ with  the smallest $k$ such that $||r_k||\leq 10^{-12}$. Again, we consider $h=\frac{1}{81}$ for all tests. 

 For  $Q$-DR, Theorem \ref{thm:QDRsmoothing-theorem} shows that the optimal parameters are $\frac{\omega_D}{\alpha_D}=\frac{36}{47}$. Under this condition, we tested a range of parameter values for the multigrid methods, and found that the choice of  $\alpha_D=0.7$ and $\omega_D=\frac{36}{47}\times 0.7$ is typically best.  For $Q$-$\sigma$-Uzawa relaxation, we use $\omega_U=1, \alpha_U=\frac{47}{36}$, and $\sigma=\frac{15}{32}$. We report two-grid and V-cycle multigrid results of  $Q$-DR  and $Q$-$\sigma$-Uzawa  relaxation in Tables \ref{tab:measured-rho-QDR} and \ref{tab:measured-rho-Uzawa}, respectively.   For problems with Dirichlet boundary conditions, we notice that the smoothing of  $Q$-$\sigma$-Uzawa  relaxation and  $Q$-DR  remains unsatisfactory near the boundary, which degrades the convergence factors compared with LFA predictions.  The  degradation has been observed in other studies, see \cite{he2018local,MR1049395}, and further pre-relaxation might be needed near the boundaries.   We point out that the influence of boundaries and of boundary conditions is not taken into account for LFA. 

\begin{table}[H]
 \caption{Measured multigrid convergence factors vs. LFA predictions for $Q$-DR.}
\centering
\begin{tabular}{lccccc}
\hline
 $\nu$                                             &1      &2     &3    & 4 \\ \hline
 
LFA prediction                                  &0.387               &0.257              &0.197      &0.160   \\
Two-grid $\rho_m$                          &0.525               &0.507               &0.443     &0.394     \\
V-cycle    $\rho_m$                          &0.797               &0.715               &0.658     &0.615    \\ \hline
\end{tabular}\label{tab:measured-rho-QDR}
\end{table} 

\begin{table}[H]
 \caption{Measured multigrid convergence factors vs. LFA predictions for $Q$-$\sigma$-Uzawa . }
\centering
\begin{tabular}{lccccc}
\hline
 $\nu$                                             &1      &2     &3    & 4 \\ \hline
 
LFA prediction                                &0.601              &0.361               &0.240     &0.197   \\  
Two-grid    $\rho_m$                                         &0.745              &0.602              &0.480     &0.382   \\
V-cycle   $\rho_m$                                             & 0.759              & 0.632             &0.542      & 0.442 \\ \hline
\end{tabular}\label{tab:measured-rho-Uzawa}
\end{table} 

For BSR, we consider the inexact version, that is,   we apply one sweep of the weighted Jacobi iteration to  the Schur complement system,  \eqref{eq:solution-of-precondtion}.  For $Q$-IBSR, there is no degradation on the convergence factor, which is observed for $Q$-DR and $Q$-$\sigma$-Uzawa relaxation. We find that using parameters, $\omega_B=1, \alpha_B=\frac{47}{36}$ and $ \omega_J=0.9$, gives the same convergence factors as  LFA two-grid predictions of  exact $Q$-BSR, shown in Table \ref{tab:measured-rho-IBSR}.  This  indicates that $Q$-IBSR is more robust with respect to boundary conditions compared with $Q$-DR and $Q$-$\sigma$-Uzawa relaxation.

\begin{table}[H]
 \caption{Measured multigrid convergence factors for $Q$-IBSR vs. LFA predictions for $Q$-BSR.}
\centering
\begin{tabular}{lccccc}
\hline
 $\nu$                                               &1      &2     &3    & 4 \\ \hline
 
LFA prediction                                      &0.361              &0.161                &0.123     &0.099   \\
Two-grid    $\rho_m$                            &0.349               &0.163               &0.115      &0.090     \\  
V-cycle      $\rho_m$                                           &0.350               &0.183               &0.129       &0.097     \\ \hline
\end{tabular}\label{tab:measured-rho-IBSR}
\end{table}

From the results shown in Tables \ref{tab:measured-rho-QDR}, \ref{tab:measured-rho-Uzawa}, and  \ref{tab:measured-rho-IBSR}, we conclude that $Q$-IBSR outperforms  $Q$-DR and $Q$-$\sigma$-Uzawa relaxation in terms of convergence factor, and   $Q$-IBSR  is not too sensitive to boundary conditions. Therefore, we recommend $Q$-IBSR  for practical use. 


\section{Conclusion}
\label{sec:concl}
 
We  have considered a  staggered finite difference discretization for the Stokes equations. For this discretization,  many of multigrid studies focus on standard coarsening. In contrast, we propose highly efficient multigrid methods with coarsening by three  to solve the resulting linear system. This coarsening-by-three strategy  leads to coarsening more quickly and reducing the number of levels.  The unknowns in different levels are nested, and, thus, the construction of grid-transfer operators is simplified, avoiding different ones needed for  two components of velocity and pressure when considering standard coarsening.  It is well-known that properly selected algorithmic parameters of multigrid methods are very important to design fast algorithms.  An LFA  is presented to quantitatively analyze three block-structured mass-based  relaxation schemes and grid-transfer operators, and help choose algorithmic parameters. We  derive  LFA optimal smoothing factors for a mass-based  Braess-Sarazin relaxation, a mass-based Uzawa relaxation, and a  mass-based distributive relaxation  with this $3h$-coarsening strategy  for the Stokes equations. 

 Our theoretical results show that the optimal smoothing factors of coarsening by three are nearly equal to those obtained from standard coarsening, 
 but  the computational cost per iteration is lower due to the three factor, and therefore coarsening by three is superior computationally.  Our results also show that the mass-based  Braess-Sarazin and  distributive relaxation schemes  have same optimal smoothing factor, which is smaller than that of the mass-based Uzawa relaxation.  Furthermore, we report two-grid and V-cycle multigrid performance for the Stokes equations with Dirichlet boundary conditions. We find that there is a degradation of actual convergence factors for mass-based Uzawa  and distributive relaxation schemes compared with LFA predictions. This might be due to the boundary conditions.  However, the  actual multigrid performance of mass-based Braess-Sarazin relaxation matches  the LFA two-grid convergence factor.  As a result,  the mass-based Braess-Sarazin relaxation is preferred. 
 
\section*{Appendix}\label{sec:appendix}
To prove Theorem \ref{thm:Uzawa-opt-mu}, we first  focus on  minimizing $\max_{\boldsymbol{\theta}\in T^{\rm H}} \{|1-\omega_U\lambda_{1,2}|\}$ over $\omega_U$, which gives a lower bound for \eqref{eq:quadratic-smoothing-form}.   We follow \cite{he2018local,YH2021massStokes} to accomplish our analysis for $Q-\sigma$-Uzawa in the following.   To analyze $\lambda_{1,2}$,  we compute  the discriminant of $T(\lambda)$, which  is 
\begin{equation*} 
  \Delta(m_r)=\frac{m_r(1+\sigma)^{2}}{\alpha_{U}^{2}}\left(m_r-\frac{4\alpha_{U}\sigma}{(1+\sigma)^{2}}\right).
\end{equation*}
Two roots of $\Delta(m_r)=0$ with respect to $m_r$ are $ m_{1}=0 $ and $m_{2}=\frac{4\alpha_{U}\sigma}{(1+\sigma)^{2}}$. 
From \eqref{quadratic-function-root-U}, we have 
\begin{equation}\label{eq:roots-formulation}
 \lambda_{1,2}=\frac{(1+\sigma)m_r}{2\alpha_{U}}\left(1\pm \sqrt{1-\frac{m_2}{m_r}}\right). 
\end{equation}
We see that when $m_r\geq m_2$,   $\lambda_1 $ and $\lambda_2 $  are real,  and when $m_r< m_2$,   $\lambda_1 $ and $\lambda_2 $ are complex.    Using  \eqref{quadratic-function-root-U},  we have
\begin{equation}
  \lambda_{1}+\lambda_{2}=\frac{m_r(1+\sigma)}{\alpha_{U}}, \quad
 \lambda_{1}\lambda_{2}=\frac{m_r\sigma}{\alpha_{U}}, \label{eq:U-roots-sum-times}
\end{equation} 
For complex eigenvalues $\lambda_{1,2}$,   we define  $\Psi =|1-\omega_U\lambda_1|$. Then,    $\Psi^{2}  = (1-\omega_{U}\lambda_{1})(1-\omega_{U}\lambda_{2})$.  Using \eqref{eq:U-roots-sum-times}, we can simplify $\Psi^{2}$ as  
\begin{equation*}
 \Psi^{2}(m_r)  =1-(\lambda_1+\lambda_2)\omega_{U}+\lambda_1\lambda_2\omega_{U}^{2} =1+\frac{\omega_{U}}{\alpha_{U}}(\omega_{U}\sigma-\sigma-1)m_r.  
\end{equation*}
Recall that $m_r(\boldsymbol{\theta}) \in[5/6,  16/9]$ for $\boldsymbol{\theta}\in T^{\rm H}$, see \eqref{eq:mr-range-high}.  We first derive a general result for  $ m_2 \geq m_r$,
\begin{theorem}\label{thm:complex-max-value}
Let $\gamma=\min\{m_{2},16/9 \}$, where $m_{2}\geq \frac{5}{6}$. Then, the smoothing factor for  $\lambda_{1,2}$ with $m_r\in[5/6, \gamma]$ is
  \begin{equation*}\label{general-complex-smoothing}
    \mu^C=\max_{m_r\in[5/6,\gamma]} \Psi(m_r)=\sqrt{1+\frac{5\omega_U(\omega_U\sigma-\sigma-1)}{6\alpha_U}}\geq\sqrt{1-\frac{5}{6\gamma}},
  \end{equation*}
where the equality is achieved  if and only if $\frac{\omega_{U}}{\alpha_{U}}(\omega_U\sigma-\sigma-1)=-\frac{1}{\gamma}.$
\end{theorem}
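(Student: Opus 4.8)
The plan is to exploit the fact, recorded just before the statement, that for complex eigenvalues $\Psi^2(m_r) = 1 + \frac{\omega_U}{\alpha_U}(\omega_U\sigma - \sigma - 1)\,m_r$ is an \emph{affine} function of $m_r$. Abbreviating its slope by $c := \frac{\omega_U}{\alpha_U}(\omega_U\sigma - \sigma - 1)$, the maximization over $m_r \in [5/6,\gamma]$ collapses to an endpoint evaluation. First I would note that a useful smoother must have $c < 0$: otherwise $\Psi^2(m_r) = 1 + c\,m_r \geq 1$ on the entire range and these modes are not damped. With $c<0$ the affine map $\Psi^2$ is strictly decreasing, so its maximum is attained at the left endpoint $m_r = 5/6$, yielding immediately $\mu^C = \Psi(5/6) = \sqrt{1 + \frac{5c}{6}}$, which, upon substituting back the definition of $c$, is exactly the asserted closed form.

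It then remains to establish the lower bound $\mu^C \geq \sqrt{1 - \frac{5}{6\gamma}}$. Since $\mu^C = \sqrt{1 + \frac{5c}{6}}$, this is equivalent to the single scalar inequality $c \geq -\frac{1}{\gamma}$. The clean way to see it is that $\Psi^2$ is a squared modulus, hence non-negative throughout the complex range; evaluating at the right endpoint gives $\Psi^2(\gamma) = 1 + c\gamma \geq 0$, which is exactly $c \geq -1/\gamma$ as $\gamma > 0$. The bound follows, and $1 + \frac{5c}{6} = 1 - \frac{5}{6\gamma}$ holds precisely when $c = -1/\gamma$, i.e. when $\Psi^2(\gamma) = 0$, reproducing the stated equality condition. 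As an algebraic cross-check, writing $\gamma = m_2 = \frac{4\alpha_U\sigma}{(1+\sigma)^2}$ and clearing the positive factor $4\alpha_U\sigma$ turns $c \geq -1/m_2$ into the perfect square $(2\omega_U\sigma - (1+\sigma))^2 \geq 0$; in the remaining case $\gamma = 16/9 \leq m_2$ the same estimate gives $c \geq -1/m_2 \geq -1/\gamma$. The hypothesis $m_2 \geq 5/6$ enters only to guarantee $\gamma \geq 5/6$, so that the complex interval $[5/6,\gamma]$ is non-degenerate.

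The routine parts are the endpoint evaluation and the two-case algebra; the one point needing care is the monotonicity/sign discussion, namely arguing cleanly that $c<0$ in the regime of interest so that the maximum sits at $m_r = 5/6$ rather than at $\gamma$, together with checking that the non-negativity argument $\Psi^2(\gamma) \geq 0$ remains valid at $\gamma = m_2$, where the complex pair degenerates to a real double root and $\Psi^2 = (1-\omega_U\lambda_1)^2 \geq 0$ still holds by continuity of the affine formula.
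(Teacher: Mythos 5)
Your proof is correct and follows the same skeleton as the paper's: $\Psi^2$ is affine in $m_r$ with slope $c=\frac{\omega_U}{\alpha_U}(\omega_U\sigma-\sigma-1)$, the damping requirement forces $c<0$, hence the maximum sits at the left endpoint $m_r=5/6$, and the lower bound reduces to the single inequality $c\geq -1/\gamma$, which both you and the paper extract from $\Psi^2(\gamma)\geq 0$. The one place you genuinely diverge is in how that non-negativity is justified. The paper proves it algebraically: since $c<0$ and $\gamma\leq m_2$, one has $\Psi^2(\gamma)\geq \Psi^2(m_2)$, and then the identity $1+c\,m_2=\left(1-\frac{2\omega_U\sigma}{1+\sigma}\right)^2\geq 0$ closes the argument. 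You instead argue structurally: on $[5/6,m_2]$ the pair $\lambda_{1,2}$ is complex-conjugate (degenerating to a real double root at $m_r=m_2$), so $\Psi^2=|1-\omega_U\lambda_1|^2$ is non-negative there by construction, and $\gamma\leq m_2$ places $\gamma$ inside this region. Your route is cleaner, avoids the computation entirely, and makes transparent that equality $c=-1/\gamma$ means $\Psi^2(\gamma)=0$; moreover, your ``algebraic cross-check'' via the perfect square $\bigl(2\omega_U\sigma-(1+\sigma)\bigr)^2\geq 0$ is precisely the paper's identity, so your write-up in fact subsumes the published proof. Both treatments handle the sign of $c$ identically, as an implicit restriction to parameters for which the complex modes are actually damped, so there is no gap on that point either.
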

\begin{proof}
When $m_r\in[5/6, \gamma]$, $\Delta(\alpha_{U},\sigma)\leq 0$ and $|1-\omega_{U}\lambda_{1}|=|1-\omega_{U}\lambda_{2}|=\Psi(m_r)$.   Setting  $\Psi^2(m_r)<1$ gives  $\frac{\omega_U(\omega_U\sigma-\sigma-1)}{\alpha_U}<0$. Using $\gamma=\min\{m_2,16/9\}$ gives
\begin{equation*}
\Psi^2(\gamma)=1+\frac{\omega_{U}}{\alpha_{U}}(\omega_{U}\sigma-\sigma-1)\gamma \geq 1+\frac{\omega_{U}}{\alpha_{U}}(\omega_{U}\sigma-\sigma-1)m_2
=\left(1-\frac{2\omega_U\sigma}{1+\sigma}\right)^2 \geq0.
\end{equation*}
This means that $1+\frac{\omega_{U}}{\alpha_{U}}(\omega_{U}\sigma-\sigma-1)\gamma \geq 0$, that is, $\frac{\omega_{U}}{\alpha_{U}}(\omega_U\sigma-\sigma-1)\geq-\frac{1}{\gamma}$.
Furthermore,
\begin{equation*}
  \max_{m_r\in[5/6,\gamma]}\Psi(m_r)=\Psi(5/6)=\sqrt{1+\frac{5\omega_{U}}{6\alpha_{U}}(\omega_U\sigma-\sigma-1)}
  \geq\sqrt{1-\frac{5}{6\gamma}},
\end{equation*}
where the equality is achieved if and only if $\frac{\omega_U(\omega_U\sigma-\sigma-1)}{\alpha_U}=\frac{-1}{\gamma}$.
\end{proof}
 
Note that  $m_r(\boldsymbol{\theta}) \in[5/6,  16/9]$.  We consider the special situation that $m_2>\frac{16}{9}$ in Theorem \ref{thm:complex-max-value}.
\begin{corollary}\label{corol:m2-large}
For $m_{2}=\frac{4\alpha_{U}\sigma}{(1+\sigma)^{2}}>\frac{16}{9}$,   the optimal smoothing factor for  $Q$-$\sigma$-Uzawa relaxation  is not less than  $\frac{\sqrt{34}}{8}$.
\end{corollary}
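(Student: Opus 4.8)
The plan is to invoke Theorem \ref{thm:complex-max-value} in the special regime where the entire frequency band $m_r \in [5/6, 16/9]$ lies below the branch point $m_2$, so that $\lambda_{1,2}$ are complex for every $\boldsymbol{\theta} \in T^{\rm H}$. Under the hypothesis $m_2 = \frac{4\alpha_U \sigma}{(1+\sigma)^2} > \frac{16}{9}$, the truncation constant is $\gamma = \min\{m_2, 16/9\} = \frac{16}{9}$. First I would substitute this value of $\gamma$ into the lower bound from Theorem \ref{thm:complex-max-value}, which gives
\begin{equation*}
\mu^C = \max_{m_r \in [5/6, 16/9]} \Psi(m_r) \geq \sqrt{1 - \frac{5}{6\gamma}} = \sqrt{1 - \frac{5}{6}\cdot\frac{9}{16}} = \sqrt{1 - \frac{15}{32}} = \sqrt{\frac{17}{32}}.
\end{equation*}

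The next step is to relate this bound on the complex-eigenvalue contribution $\mu^C$ back to the full optimal smoothing factor $\mu_{{\rm opt},U}$ defined in \eqref{eq:quadratic-smoothing-form}. Since $\mu_{{\rm opt},U}$ is the minimax over $(\alpha_U, \sigma, \omega_U)$ of the maximum over $\boldsymbol{\theta}$ of $\{|1-\omega_U\lambda_{1,2}|, |1-\omega_U\lambda_3|\}$, and the $\lambda_{1,2}$ contribution is bounded below by $\mu^C \geq \sqrt{17/32}$ for any parameter choice satisfying $m_2 > 16/9$, the overall smoothing factor in this regime is at least $\sqrt{17/32}$. I would then simplify: $\sqrt{17/32} = \frac{\sqrt{17}}{\sqrt{32}} = \frac{\sqrt{17}}{4\sqrt{2}} = \frac{\sqrt{34}}{8}$, matching the claimed bound. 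This establishes that restricting to the parameter regime $m_2 > 16/9$ cannot achieve a smoothing factor below $\frac{\sqrt{34}}{8}$.

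The main subtlety, rather than obstacle, is confirming that the inequality direction is the one we want: the corollary asserts the optimal smoothing factor is \emph{not less than} $\frac{\sqrt{34}}{8}$ in this regime, which follows directly because $\mu^C$ is one of the two quantities inside the outer maximum, and a lower bound on any term inside a $\max$ is a lower bound on the $\max$ itself, hence on the minimax. I should also note numerically that $\frac{\sqrt{34}}{8} \approx 0.729 > \sqrt{17/47} \approx 0.601$, so this regime is strictly worse than the optimum asserted in Theorem \ref{thm:Uzawa-opt-mu}; this is precisely why the corollary matters, as it lets the full proof in the Appendix exclude the case $m_2 > 16/9$ and concentrate the optimization on $m_2 \leq 16/9$, where $\lambda_{1,2}$ transition from complex to real across the band. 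The computation is otherwise a direct substitution into the already-established Theorem \ref{thm:complex-max-value}, so no further machinery is needed.
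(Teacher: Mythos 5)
Your proposal is correct and follows essentially the same route as the paper: invoke Theorem \ref{thm:complex-max-value} with $\gamma=\min\{m_2,16/9\}=\frac{16}{9}$, compute $\sqrt{1-\frac{5}{6\gamma}}=\sqrt{\frac{17}{32}}=\frac{\sqrt{34}}{8}$, and observe that a lower bound on the complex-mode contribution $\mu^C$ is a lower bound on the full minimax including $\lambda_3$. Your added observations (that the whole band $m_r\in[5/6,16/9]$ lies in the complex regime, and that $\frac{\sqrt{34}}{8}>\sqrt{17/47}$ so this regime can be excluded in the main proof) match the paper's intent exactly.
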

\begin{proof}
Since $m_2>\frac{16}{9}$, from Theorem \ref{thm:complex-max-value} with $\gamma =\frac{16}{9}$, we know the smoothing factor for the complex modes $\lambda_{1,2}$ is 
  \begin{equation*} 
  \mu^C=\Psi(5/6)\geq\sqrt{1-\frac{5}{6\gamma}}=\sqrt{1-\frac{5}{6 \cdot \frac{16}{9} } }  =\frac{\sqrt{34}}{8}.
\end{equation*}
It follows that when $m_2>\frac{16}{9}$, the optimal smoothing for $\lambda_1, \lambda_2$ and $\lambda_3$ is not less than $\frac{\sqrt{34}}{8}$.
\end{proof}

Next,  we  give a general result for  $ m_r\geq  m_2$ for the real case. We consider   $m_r\in[\gamma,16/9]$ and $m_2\leq \gamma$. From   (\ref{eq:roots-formulation}),  we have
\begin{equation*}
|1-\omega_{U}\lambda_{1,2}|=
\left|1-\frac{(1+\sigma)\omega_{U}}{2\alpha_{U}}m_r\left(1\pm\sqrt{1-\frac{m_2}{m_r}}\right)\right|.
\end{equation*}
For simplicity, let $  \chi_{\pm}(m_r ) = \frac{m_r }{2}\left(1\pm\sqrt{1-\frac{m_2}{m_r}}\right).$
It is easy to see that  $\chi_{+}(m_r )$ is an increasing function over  $m_r \in[\tau,16/9]$.  As to $\chi_{-}(m_r )$, \cite{he2018local} has shown that it is a  decreasing function.  Thus, 
\begin{eqnarray}
  &\chi_{+}(m_r )_{\rm max}&=\chi_{+}(16/9)=\frac{8}{9}\left(1+\sqrt{1-\frac{9 m_2}{16}}\right)=:\chi_{1},\label{eq:R+Max}\\
  & \chi_{-}(m_r )_{\rm min}&=\chi_{-}(16/9)=\frac{8}{9}\left(1-\sqrt{1-\frac{9m_2}{16}}\right) =:\chi_{2}.\label{eq:R-Min}
\end{eqnarray}
Define
\begin{eqnarray}
 & \mu^R &=\max_{m_r \in[\gamma,16/9]}\left \{|1-\omega_U\lambda_1|, |1-\omega_U\lambda_2|\right \} \nonumber \\
  &&={\rm max}\bigg\{\left|1-\frac{(1+\sigma)\omega_U}{\alpha_U}\chi_1\right|,\left|1-\frac{(1+\sigma)\omega_U}{\alpha_U}\chi_2\right|\bigg\} \nonumber  \\
 & & =\left\{
  \begin{aligned} \label{eq:compare-R1-R2}
    &\frac{(1+\sigma)\omega_{U}}{\alpha_{U}}\chi_1-1, \quad {\rm if}\quad \frac{(1+\sigma)\omega_{U}}{\alpha_{U}}\geq \frac{9}{8}. \\
    &1-\frac{(1+\sigma)\omega_{U}}{\alpha_{U}}\chi_2, \quad {\rm if}\quad \frac{(1+\sigma)\omega_{U}}{\alpha_{U}}\leq \frac{9}{8}.
  \end{aligned}
               \right. 
\end{eqnarray}

Note that $\mu^R$ is a function of $m_2$. Now,  we can give a lower bound on the optimal smoothing factor for  $Q$-$\sigma$-Uzawa  relaxation for $m_2\leq\frac{5}{6}$.
\begin{theorem}\label{U-all-complex}
For $m_{2}=\frac{4\alpha_{U}\sigma}{(1+\sigma)^{2}}\leq\frac{5}{6}$, the optimal smoothing factor for $Q$-$\sigma$-Uzawa  relaxation is not less than $\frac{\sqrt{34}}{8}$.
\end{theorem}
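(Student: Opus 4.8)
The plan is to bound $\mu_{{\rm opt},U}$ from below by discarding the isolated mode $\lambda_3$ and controlling only the contribution of $\lambda_{1,2}$. First I would observe that the hypothesis $m_2\le\frac56$ places us entirely in the \emph{real} regime: since $m_r(\boldsymbol{\theta})\in[\frac56,\frac{16}{9}]$ by \eqref{eq:mr-range-high}, we have $m_r\ge\frac56\ge m_2$ for every $\boldsymbol{\theta}\in T^{\rm H}$, so $\lambda_{1,2}$ are real throughout. Taking $\gamma=\frac56$, the maximum of $|1-\omega_U\lambda_{1,2}|$ over $T^{\rm H}$ is then exactly the quantity $\mu^R$ in \eqref{eq:compare-R1-R2}. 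Because $\max\{|1-\omega_U\lambda_3|,\,|1-\omega_U\lambda_{1,2}|\}\ge|1-\omega_U\lambda_{1,2}|$, the mode $\lambda_3$ only inflates the smoothing factor, so it suffices to show $\mu^R\ge\frac{\sqrt{34}}{8}$ for every admissible parameter choice with $m_2\le\frac56$.

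Next I would minimize $\mu^R$ over the relaxation weight at a fixed value of $m_2$. Writing $t:=\frac{(1+\sigma)\omega_U}{\alpha_U}$, note that $m_2=\frac{4\alpha_U\sigma}{(1+\sigma)^2}$ is determined by $\alpha_U$ and $\sigma$ alone, while $t$ sweeps all of $(0,\infty)$ as $\omega_U$ varies, so this is a genuine one-parameter minimization. The key algebraic fact is that \eqref{eq:R+Max} and \eqref{eq:R-Min} give $\chi_1+\chi_2=\frac{16}{9}$ and $\chi_1-\chi_2=\frac{16}{9}\sqrt{1-\frac{9m_2}{16}}$. Consequently the two branches of \eqref{eq:compare-R1-R2} switch precisely at $t=\frac{9}{8}=\frac{2}{\chi_1+\chi_2}$, which is exactly the balance point where $t\chi_1-1=1-t\chi_2$. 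On the branch $t\le\frac98$ the value $1-t\chi_2$ decreases in $t$, and on the branch $t\ge\frac98$ the value $t\chi_1-1$ increases in $t$; hence the minimum over $t$ is attained at $t=\frac98$, where a direct substitution using $\chi_2=\frac89(1-\sqrt{1-\frac{9m_2}{16}})$ gives
\begin{equation*}
\min_{\omega_U}\mu^R = 1-\frac{9}{8}\chi_2 = \sqrt{1-\frac{9m_2}{16}}.
\end{equation*}

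Finally I would invoke monotonicity in $m_2$: since $\sqrt{1-\frac{9m_2}{16}}$ is decreasing, the hypothesis $m_2\le\frac56$ yields $\sqrt{1-\frac{9m_2}{16}}\ge\sqrt{1-\frac{9}{16}\cdot\frac56}=\sqrt{\frac{17}{32}}=\frac{\sqrt{34}}{8}$. Chaining $\mu_{{\rm opt},U}\ge\mu^R\ge\min_{\omega_U}\mu^R\ge\frac{\sqrt{34}}{8}$ then completes the argument. I expect the main obstacle to be the bookkeeping of the second step: one must confirm that the branch switch in \eqref{eq:compare-R1-R2} coincides with the balance point $t=\frac98$ (which hinges on the identity $\chi_1+\chi_2=\frac{16}{9}$) and that the piecewise-monotone structure really forces the minimizer to sit there, rather than at an interior critical point of either branch. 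The reduction of the full $3\times3$ symbol to this scalar optimization is the conceptual step, and it relies on the monotonicity of $\chi_\pm$ established just before \eqref{eq:R+Max}.
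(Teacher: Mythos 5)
Your proposal is correct and follows essentially the same route as the paper's proof: both restrict to the real regime $m_r\ge m_2$, reduce the problem to the piecewise quantity $\mu^R$ in \eqref{eq:compare-R1-R2} with the balance point $\frac{(1+\sigma)\omega_U}{\alpha_U}=\frac98$, exploit monotonicity in $m_2$, and note that including $\lambda_3$ can only increase the maximum. The only (cosmetic) difference is the order of optimization—you minimize over the weight first, obtaining the clean intermediate expression $\sqrt{1-\tfrac{9m_2}{16}}$, and then use $m_2\le\frac56$, whereas the paper fixes $m_2=\frac56$ first and then balances the two branches.
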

\begin{proof}
When $m_2\leq\frac{5}{6}$, $\lambda_1, \lambda_2$ are all real.   From  \eqref{eq:R+Max}, \eqref{eq:R-Min}, and \eqref{eq:compare-R1-R2}, we see that $\mu^R$ is a decreasing function of  $m_2$. Thus, for $m_r \in[5/6,16/9]$,
  \begin{equation*}
   \mu^R(m_2)\geq \mu^R(5/6) = \left\{
  \begin{aligned}
    &\frac{8(1+\sigma)\omega_{U}}{9\alpha_{U}}\left(1+\frac{\sqrt{34}}{8}\right)-1, \quad {\rm if}\quad \frac{(1+\sigma)\omega_{U}}{\alpha_{U}}\geq \frac{9}{8}. \\
    &1-\frac{8(1+\sigma)\omega_{U}}{9\alpha_{U}}\left(1-\frac{\sqrt{34}}{8}\right), \quad {\rm if}\quad \frac{(1+\sigma)\omega_{U}}{\alpha_{U}}\leq \frac{9}{8}.
  \end{aligned}
               \right. \\
  \end{equation*}
To minimize $\mu^R(m_2)$ with respect to $\alpha_U,\omega_U,\sigma$,  it requires that $\frac{(1+\sigma)\omega_{U}}{\alpha_U}=\frac{9}{8}$ and $m_2=\frac{4\alpha_{U}\sigma}{(1+\sigma)^{2}}=\frac{5}{6}$. It follows that $\min_{(\alpha_U,\omega_U,\sigma)}\mu^R(m_2)= \frac{\sqrt{34}}{8}$.  Since there is another eigenvalue $\lambda_3$,  the optimal smoothing factor for $\lambda_{1,2}$ and $\lambda_3$ may be not less than $\frac{\sqrt{34}}{8}$.
\end{proof}
The above  discussions indicate that when $m_2>\frac{16}{9}$ or $m_2 \leq \frac{5}{6}$, the optimal smoothing factor is at least $\frac{\sqrt{34}}{8}$.  Next, we will show that  the global optimal smoothing factor for all choices of $m_2$  achieves when $\frac{5}{6}\leq m_2\leq \frac{16}{9}$.  

\begin{proof}
We first consider $m_{2}\in[5/6, 16/9]$. From previous discussions, we know that  for  $m_r \in[5/6, m_2]$,  $ \lambda_{1,2}$ are complex, and  for $m_r \in[m_2,16/9]$,  $ \lambda_{1,2}$ are real.  We consider  $\frac{(1+\sigma)\omega_U}{\alpha_U}=\frac{9}{8}$ such that the two expressions in \eqref{eq:compare-R1-R2}  are the same.  Furthermore, we have  $m_2=\frac{4\alpha_U\sigma}{(1+\sigma)^2}=\frac{4\cdot 8^2}{9^2}\frac{\omega^2_U\sigma }{\alpha_U}$.

For $m_r \in[5/6, m_2]$,  Theorem \ref{thm:complex-max-value} gives 
\begin{equation*}
\mu^C=\sqrt{1+\frac{5 \omega_U(\omega_U\sigma-\sigma-1)}{6\alpha_U}}=\sqrt{\frac{1}{16}+ \frac{5\omega^2_U\sigma}{6\alpha_U}}.
\end{equation*}

For $m_r \in[m_2,16/9]$,   using \eqref{eq:compare-R1-R2} gives
\begin{equation*}
\mu^R=\frac{(1+\sigma)\omega_{U}}{\alpha_{U}}\chi_1-1=\sqrt{1-\frac{9 m_2}{16}}=\sqrt{1-\frac{16\omega^2_U\sigma}{9\alpha_U}}.
\end{equation*}

Thus, for $ \frac{(1+\sigma)\omega_U}{\alpha_U}=\frac{9}{8}$,
\begin{equation}\label{eq:mini-mur-muc} 
 \max_{m_r\in[5/6,16/9]}\left \{|1-\omega_U\lambda_{1,2}| \right \}= \max \left \{\mu^R,\mu^C\right \} .
\end{equation} 

Note that $\mu^R$ is a decreasing function of $\frac{\omega^2_U\sigma}{\alpha_U}$ and $\mu^C$ is an increasing function of $\frac{\omega^2_U\sigma}{\alpha_U}$.  The minimum of \eqref{eq:mini-mur-muc}  is  achieved if and only if $\mu^R=\mu^C$ and is 
\begin{equation*}
\displaystyle \min _{(\alpha_{U},\omega_{U},\sigma),\frac{(1+\sigma)\omega_U}{\alpha_U}=\frac{9}{8}}
\max \left\{\mu^R,\mu^C\right\}=\sqrt{\frac{17}{47}}=:\mu_{{\rm opt}, U},
\end{equation*}
under the condition that 
\begin{equation}
  \frac{\omega^2_U\sigma}{\alpha_U}=\frac{135}{376},\quad   \frac{(1+\sigma)\omega_U}{\alpha_U}=\frac{9}{8}.\label{eq:optimal-condion-1}
\end{equation}
 Since $\sqrt{\frac{17}{47}} < \frac{\sqrt{34}}{8}$, it means that $m_2 \in [5/6, 16/9] $ gives a smaller smoothing factor for $\lambda_{1,2}$. Next, we  prove that the optimal smoothing factor for $\lambda_{1,2}$  over all possible parameters  is achieved at $\frac{(1+\sigma)\omega_U}{\alpha_U}=\frac{9}{8}$. Then, we include $\lambda_3$.

Let  $a=\frac{(1+\sigma)\omega_U}{\alpha_U}$ and $b=\frac{\omega^2_U\sigma}{\alpha_U}$. Then, $m_2=\frac{4\alpha_U\sigma}{(1+\sigma)^2}=\frac{4b}{a^2}$.
 Assume that $\mu^C\leq\sqrt{\frac{17}{47}}$,  that is, 
 \begin{equation*}
 \sqrt{1+\frac{5 \omega_U(\omega_U\sigma-\sigma-1)}{6\alpha_U}}=\sqrt{1-\frac{5a}{6}+\frac{5b}{6}}\leq\sqrt{\frac{17}{47}},
 \end{equation*}
which gives  $b\leq a-\frac{36}{47}$.
 
Under condition $b\leq a-\frac{36}{47}$, we consider two situations of $a$:  

Case 1: If $a>\frac{9}{8}$,  using   \eqref{eq:compare-R1-R2}   gives
\begin{eqnarray*}
\mu^R&=&\frac{(1+\sigma)\omega_U}{\alpha_U} \frac{8}{9}\left(1+\sqrt{1-\frac{9m_2}{16}}\right)-1\\
  &=&\frac{8}{9}\left(a+\sqrt{a^2-9b/4}\right)-1\\
  &>& \frac{8}{9}  \sqrt{a^2-\frac{9}{4}(a-36/47)}\\
  &=& \frac{8}{9} \sqrt{(a- 9/8)^2+(81\times 17)/(64\times 47)}\\
  &>&\sqrt{\frac{17}{47}}.
\end{eqnarray*}

Case 2: If $a<\frac{9}{8}$,  using \eqref{eq:compare-R1-R2} gives
\begin{eqnarray*}
\mu^R&=&1-\frac{(1+\sigma)\omega_U}{\alpha_U} \frac{8}{9}\left(1-\sqrt{1-\frac{9 m_2}{16 }}\right) \\
  &=&1-\frac{8}{9} a+\frac{8}{9} \sqrt{a^2-9b/4} \\
  &>& \frac{8}{9}  \sqrt{a^2-\frac{9}{4} (a-36/47)}  \\
  &>&\sqrt{\frac{17}{47}}. 
\end{eqnarray*}

It means that $a=\frac{(1+\sigma_U)\omega_U}{\alpha_U}=\frac{9}{8}$ gives  the optimal smoothing factor for $\lambda_{1,2}$, and the corresponding optimal smoothing factor is $\mu_{{\rm opt},U}=\sqrt{\frac{17}{47}}$.

Next, we consider $\lambda_{3}=\frac{m_r}{\alpha_U}$.  From Theorem \ref{thm:mass-Laplace-opt-mu}, we know that the optimal smoothing factor for $\lambda_3$ is $\frac{17}{47}$, which is less than $\sqrt{\frac{17}{47}}$.  Thus, the optimal smoothing factor for $\lambda_{1,2}$ and $\lambda_3$ is not less than $\sqrt{\frac{17}{47}}$. We will show it is $\sqrt{\frac{17}{47}}$. Recall that $m_r\in[5/6,16/9]$ for $\boldsymbol{\theta}\in T^{\rm H}$.  Let
\begin{equation*}
 \left |1-\frac{5\omega_{U}}{6\alpha_U}\right|\leq\mu_{{\rm opt},U}\quad  {\rm and }\,\quad \left |1-\frac{16 \omega_{U}}{9\alpha_U}\right|\leq\mu_{{\rm opt},U}.
\end{equation*}
The above two inequalities give 
\begin{equation}\label{eq:third-cond}
\frac{6}{5}(1-\mu_{{\rm opt},U})\frac{1}{\omega_U}\leq\frac{1}{\alpha_U}\leq\frac{9(1+\mu_{{\rm opt},U}) }{16}\frac{1}{\omega_U}.
\end{equation}
Note that  \eqref{eq:optimal-condion-1}    can be expressed as 
\begin{equation}
  \alpha_{U}= \frac{376\omega^2_U}{9(47\omega_U-15)},\quad  \sigma = \frac{15}{47 \omega_{U}-15}.\label{eq:parameter-condition-2}
\end{equation}
Using \eqref{eq:parameter-condition-2}, we can rewrite \eqref{eq:third-cond}  as
\begin{equation*} 
  \frac{225}{47(16 \mu_{{\rm opt},U}-1) }\leq \omega_{U}\leq \frac{30}{47(1-\mu_{{\rm opt},U})},
\end{equation*}
which gives the desired result.
\end{proof}
\bibliographystyle{siam}
\bibliography{MAC3_ref}
\end{document}